\documentclass[12pt]{article}
\usepackage{amssymb,amsmath,amsthm,mathrsfs}
\usepackage[usenames]{color}
\usepackage{hyperref}
\usepackage[all]{xy}
\usepackage{xypic}
\usepackage{graphicx}
\usepackage{amscd}

\oddsidemargin=0cm
\evensidemargin=0cm

\baselineskip 18pt \textwidth 16cm \sloppy \theoremstyle{plain}

\newtheorem{theorem}{Theorem}[subsection]
\newtheorem{proposition}[theorem]{Proposition}
\newtheorem{corollary}[theorem]{Corollary}
\newtheorem{lemma}[theorem]{Lemma}
\newtheorem{definition}[theorem]{Definition}
\newtheorem{notation}[theorem]{Notation}

\newtheorem{remark}[theorem]{Remark}

\newtheorem{prop}[theorem]{Proposition}

\newcommand{\Z}{\mathbb Z}

\newcommand{\nir}[1]{{\color{red}{Nir: #1}}}
\newcommand{\RamiAlt}[1]{}
\newcommand{\RamiAbout}[1]{}

\DeclareMathOperator{\GL}{GL}

\DeclareMathOperator{\Hom}{Hom}
\DeclareMathOperator{\End}{End}
\DeclareMathOperator{\Ad}{Ad}

\DeclareMathOperator{\Res}{Res}

\DeclareMathOperator{\trace}{trace}

\DeclareMathOperator{\Ind}{Ind}

\DeclareMathOperator{\Spec}{Spec}

\DeclareMathOperator{\Gal}{Gal}
\DeclareMathOperator{\Frob}{Frob}
\DeclareMathOperator{\Fr}{Fr}
\DeclareMathOperator{\supp}{supp}

\DeclareMathOperator{\Irr}{Irr}


\usepackage[usenames,dvipsnames]{xcolor}

\definecolor{dblue}{RGB}{6,69,173}
\definecolor{lblue}{RGB}{11,0,128}

\newcommand{\colorlinks}{true}

\newcommand{\linkcolor}{lblue}
\newcommand{\citecolor}{green}
\newcommand{\urlcolor}{dblue}

\newcommand{\linkbordercolor}{red}
\newcommand{\citebordercolor}{green}
\newcommand{\urlbordercolor}{cyan}

\usepackage{hyperref}
\hypersetup{colorlinks=\colorlinks,linkbordercolor=\linkbordercolor, linkcolor=\linkcolor, citecolor=\citecolor, urlcolor=\urlcolor,urlbordercolor=\urlbordercolor,citebordercolor=\citebordercolor}%


\newcommand{\hrefHid}[2]{
\hypersetup{urlbordercolor={1 1 1}}%
\hypersetup{urlcolor=black}%
\href{#1}{#2}%
\hypersetup{urlbordercolor=\urlbordercolor}%
\hypersetup{urlcolor=\urlcolor}%
}

\newcommand{\inhref}[2]{\hyperref[#1]{#2}}

\newcommand{\inhrefHid}[2]{%
\hypersetup{linkbordercolor={1 1 1}}%
\hypersetup{linkcolor=black}%
\inhref{#1}{#2}%
\hypersetup{linkbordercolor=\linkbordercolor}%
\hypersetup{linkcolor=\linkcolor}%
}

\newcommand{\defHref}[3]{\newcommand{#1}[1][#3]{\href{#2}{##1}}}
\newcommand{\defInhref}[3]{\newcommand{#1}[1][#3]{\inhref{#2}{##1}}}
\newcommand{\defHrefHid}[3]{\newcommand{#1}[1][#3]{\hrefHid{#2}{##1}}}
\newcommand{\defInhrefHid}[3]{\newcommand{#1}[1][#3]{\inhrefHid{#2}{##1}}}

\newcommand{\defHrefBoth}[3]{%
\expandafter\defHrefHid \csname #3Hid\endcsname {#1}{#2}%
\expandafter\defHref \csname #3Vis\endcsname {#1}{#2}%
}
\newcommand{\defInhrefBoth}[3]{%
  \expandafter\defInhrefHid \csname #3Hid\endcsname {#1}{#2}%
  \expandafter\defInhref \csname #3Vis\endcsname {#1}{#2}%
}

\newcommand{\defHrefBothVis}[3]{%
\defHrefBoth{#1}{#2}{#3}%
\expandafter\defHref \csname #3\endcsname {#1}{#2}%
}
\newcommand{\defInhrefBothVis}[3]{%
  \defInhrefBoth{#1}{#2}{#3}%
  \expandafter\defInhref \csname #3\endcsname {#1}{#2}%
}

\newcommand{\defHrefBothHid}[3]{%
\defHrefBoth{#1}{#2}{#3}%
\expandafter\defHrefHid \csname #3\endcsname {#1}{#2}%
}
\newcommand{\defInhrefBothHid}[3]{%
  \defInhrefBoth{#1}{#2}{#3}%
  \expandafter\defInhrefHid \csname #3\endcsname {#1}{#2}%
}

\oddsidemargin=0cm
\evensidemargin=0cm
\baselineskip 18pt \textwidth 16cm \sloppy \theoremstyle{plain}

\newtheorem*{theorem*}{Theorem}
\newtheorem*{remark*}{Remark}
\newtheorem*{conjecture*}{Conjecture}

\newtheorem{introtheorem}{Theorem}

\newtheorem{introconjecture}[introtheorem]{Conjecture}

\newcommand{\C}{\mathbb C}

\DeclareMathOperator{\Tr}{Tr}
\DeclareMathOperator{\Rad}{Rad}

\newcommand{\ACh}{\operatorname{ACh}}
\newcommand{\ind}{\operatorname{ind}}
\newcommand{\IC}{\operatorname{IC}}

\newcommand{\Span}{{\operatorname{Span}}}
\renewcommand{\dim}{{\operatorname{dim}}}

\renewcommand{\Hom}{{\operatorname{Hom}}}

\newcommand{\irr}{\operatorname{Irr}}


\newcommand{\bF}{\mathbb{F}}
\usepackage{varioref}

\title{Bounds on multiplicities of spherical spaces over finite fields}
\author{Avraham Aizenbud\thanks{{The Incumbent of Dr. A. Edward Friedmann Career Development Chair in Mathematics}} \ and Nir Avni}
\begin{document}

\maketitle
\begin{abstract}
Let $G$ be a reductive group scheme of type $A$ acting on a spherical scheme $X$. We prove that {there exists a number $C$ such that
 the multiplicity $\dim\Hom(\rho,\C[X(F)])$ is bounded by $C$, for any finite field $F$ and any irreducible representation $\rho$ of $G(F)$.}
We give an explicit bound for
{$C$}.
We conjecture that this result is true for any reductive group scheme and when $F$ ranges {(in addition)} over all local fields of characteristic $0$.

Different aspects of this conjecture were studied in \cite{Del,SV,KO,KS}

\end{abstract}

\tableofcontents

\section{Introduction}

Let $G$ be a reductive group over a field $k$. A normal $G$ variety $X$ is called spherical if, for any Borel subgroup $B \subset G$, there is a dense $B$-orbit in $X$. If the characteristic of $k$ is zero, the $G$-representation $O_X(X)$ is multiplicity-free. If $k$ is a  local field of characteristic $0$, one can consider the $G(k)$-representation on the space $C^{\infty}(X(k))$. In this case, it is conjectured that the multiplicity $\dim\Hom (\rho, C^{\infty}(X(k)))$, of every irreducible $G(k)$-representation $\rho$, is finite, and, moreover, that these multiplicities are bounded uniformly in $\rho$. Parts of this conjecture were proved in special cases; see \cite{Del,SV,KO,KS}.

In this paper, we consider an analog question for finite fields:

\begin{introconjecture} \label{conj:intro} Let $G$ be a smooth reductive group scheme over $\mathbb{Z}$ acting on a scheme $X$. Assume that, for every prime $p$, the variety $X_{\mathbb{F}_p}:=X \times \Spec(\mathbb{F}_p)$ is $G_{\mathbb{F}_p}$-spherical. Then there is a constant $N$ such that, for any {finite field $F$} and every irreducible representation $\rho$ of $G(F)$, the multiplicity of $\rho$ in $\mathbb{C}[X(F)]$ is less than $N$.
\end{introconjecture}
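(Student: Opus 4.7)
The idea is to reduce the multiplicity bound to a uniform bound on the number of $H(F)$-double cosets in $G(F)$, and then to invoke finiteness theorems for orbits on spherical varieties together with Lang--Steinberg.

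First I would decompose $X(F)=\bigsqcup_i G(F)\cdot x_i$ into its $G(F)$-orbits and set $H_i:=\Stab_G(x_i)$, so that $\mathbb{C}[X(F)]=\bigoplus_i \Ind_{H_i(F)}^{G(F)}\mathbf{1}$. Frobenius reciprocity then gives
\[
\dim\Hom_{G(F)}\!\bigl(\rho,\mathbb{C}[X(F)]\bigr) \;=\; \sum_i \dim \rho^{H_i(F)},
\]
so it suffices to bound the number of orbits and each $\dim \rho^{H_i(F)}$ uniformly in $\rho$ and $F$. Since $X_{\bar F}$ is spherical, it has only finitely many $G_{\bar F}$-orbits (each being a union of finitely many $B$-orbits), and Lang--Steinberg together with the boundedness of $H^1(F,\cdot)$ for smooth affine group schemes yields a uniform bound on the number of $G(F)$-orbits on $X(F)$.

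For the individual multiplicities I would use the following Hecke-algebra inequality. Writing $H=H_i$, the isotypic decomposition
\[
\Ind_{H(F)}^{G(F)}\mathbf{1} \;=\; \bigoplus_{\rho\in\Irr(G(F))} \rho^{H(F)}\otimes \rho
\]
realizes $\rho^{H(F)}$ as an irreducible module over the semisimple Hecke algebra $\mathcal{H}(G(F),H(F))$, whose dimension equals $|H(F)\bs G(F)/H(F)|$. Hence $(\dim \rho^{H(F)})^2 \le |H(F)\bs G(F)/H(F)|$, and the whole problem collapses to bounding the number of $H(F)$-double cosets in $G(F)$ uniformly. These double cosets parametrize $F$-rational $H\times H$-orbits on $G$; sphericality of $G/H$ is equivalent to $G$ being a spherical $(H\times H)$-variety under left--right multiplication, so the number of $\bar F$-orbits is finite by the theorems of Brion and Knop, and Lang--Steinberg once more passes to $F$-orbits.

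The hard part is making the final count \emph{uniform} in $F$ with an \emph{effective} constant: one needs the Brion/Knop-type finiteness to descend from an absolute bound over $\mathbb{Z}$ (independent of $\characteristic(F)$), and one must control the component groups of the stabilizers so that the Galois cohomology factor in the Lang--Steinberg step stays bounded independently of $F$. This is presumably where the restriction to type $A$ enters: for $\GL_n$, the classification of spherical subgroups, the connectedness of relevant centralizers, and the vanishing of the pertinent Galois cohomology make every ingredient explicit, whereas in other types the combinatorics of spherical orbits and small-characteristic pathologies obstruct a direct argument, leaving the conjecture open in general.
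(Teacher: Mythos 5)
The plan has a genuine gap at its central step. You reduce bounding $\dim\rho^{H(F)}$ to bounding the number of double cosets $|H(F)\backslash G(F)/H(F)|$, and then assert that sphericality of $G/H$ is equivalent to $G$ being a spherical $(H\times H)$-variety, so that this count is uniformly bounded. That equivalence is false, and the double coset count is generally unbounded. Concretely, take $G=\GL_2$ and $H=T$ the diagonal torus: $G/T$ is spherical (there are three $T$-orbits on $\mathbb{P}^1=G/B$), but $G$ is \emph{not} spherical under $T\times T$ (the ratio $ad/(bc)$ of the matrix entries is an invariant of the $T\times T$-action), and a direct count gives $|T(\mathbb{F}_q)\backslash\GL_2(\mathbb{F}_q)/T(\mathbb{F}_q)|=q+4$. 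Your Hecke-algebra inequality $(\dim\rho^{H(F)})^2\le|H(F)\backslash G(F)/H(F)|$ is valid, but here it only yields a bound of order $\sqrt{q}$, which is not uniform, whereas the actual multiplicities for this pair are bounded by a constant (as Theorem~\ref{thm:GLn} shows). The underlying issue is that $|H\backslash G/H|=\sum_\rho(\dim\rho^{H})^2$ can grow with $q$ even when each individual summand stays bounded; the finite orbit count that sphericality controls is $|B\backslash G/H|$, not $|H\backslash G/H|$, and $|B\backslash G/H|$ directly bounds only the multiplicities of principal series representations.

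Your closing diagnosis of why type $A$ is special (classification of spherical subgroups, connectedness of centralizers, Galois cohomology) also does not match the paper's mechanism. The paper restricts to type $A$ because for $\GL_n$ and its forms every character sheaf is induced from a maximal torus (there are no cuspidal character sheaves), which allows a categorified principal-series argument: one bounds the weights, the range of cohomological degrees, and the top cohomology of the complex $Rq_!\bigl(\mathcal{M}\otimes^L Rf_!\underline{\mathbb{Q}_\ell}\bigr)$ whose Frobenius trace is $|G(\mathbb{F}_q)|\cdot\langle\chi_{\mathcal{M},\alpha},\rho_{\mathbb{C}[X(\mathbb{F}_q)]}\rangle$, and then uses integrality plus a periodicity argument to pass from the $n\to\infty$ asymptotic bound to a bound at $n=1$. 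Your orbit/Frobenius-reciprocity decomposition and the bound on the number of $G(F)$-orbits via Lang--Steinberg are reasonable first reductions, but the step that is supposed to bound the individual multiplicities is the one that fails.
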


We prove:

\begin{introtheorem} \label{thm:intro.main} Conjecture \ref{conj:intro} holds if, in addition, $G$ has type $A$, i.e., if, for every geometric fiber $G_k$ (where $k$ is any algebraically closed field), the universal cover of $[G_k,G_k]$ is a product of special linear groups.
\end{introtheorem}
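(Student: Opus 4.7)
My plan is to express the multiplicity as a geometric count, combining Frobenius reciprocity with Deligne--Lusztig theory for type $A$ groups. As a preliminary reduction, sphericity of $X$ implies that $G(\bar F)$ acts on $X(\bar F)$ with finitely many orbits. By Lang--Steinberg applied to the stabilizers (and control of component groups where the stabilizer is non-connected), the number of $G(F)$-orbits on $X(F)$ is bounded independently of $F$, and each such orbit is of the form $G(F)/H'(F)$ for an $F$-form $H'$ of a fixed spherical subgroup $H \subset G_{\bar F}$. Since the multiplicity in $\C[X(F)]$ is additive over orbits, Frobenius reciprocity reduces the theorem to bounding $\dim \rho^{H(F)}$ uniformly over finite fields $F$, over $\rho \in \Irr(G(F))$, and over the boundedly many $F$-forms of $H$.

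The type $A$ hypothesis enters through the Lusztig classification of irreducible characters of $G(F)$. For $G$ of type $A$, every irreducible $\rho$ of $G(F)$ is a $\pm 1$-combination, of bounded length, of Deligne--Lusztig virtual characters $R_T^G(\theta)$; in particular, no exceptional unipotent cuspidal characters occur. Consequently,
\[
\dim \rho^{H(F)} \ =\ \langle \rho, \Ind_{H(F)}^{G(F)} \mathbf{1}\rangle \ \leq\ C_G \cdot \max_{T,\theta}\,\bigl|\langle R_T^G(\theta), \Ind_{H(F)}^{G(F)} \mathbf{1}\rangle\bigr|,
\]
where the maximum is over $F$-stable maximal tori $T \subset G$ and characters $\theta$ of $T(F)$, and $C_G$ depends only on the root datum of $G$.

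It remains to bound the Deligne--Lusztig pairing $\langle R_T^G(\theta), \Ind_{H(F)}^{G(F)} \mathbf{1}\rangle$ by a geometric count of $H$-orbits on a flag variety. By the character formula for $R_T^G(\theta)$, this pairing is a sum, indexed by certain $(T \times H)(F)$-orbits on $G(F)$, of trace-of-Frobenius contributions from the \'etale cohomology of Deligne--Lusztig varieties. The sphericity of $G/H$ guarantees that $H$ acts on $G/B$ with boundedly many orbits over $\bar F$, the bound being an invariant of the root datum of $(G,H)$ independent of $F$. A Lang--Steinberg argument bounds the $F$-rational double-coset count by this geometric one, while Deligne's Weil conjecture estimates bound each cohomological contribution. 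The main obstacle I anticipate is executing this last step uniformly in $T$ and $\theta$ --- there are $|W|$ conjugacy classes of $F$-stable tori and for each of them the Deligne--Lusztig cohomology must be controlled carefully --- but both main inputs (finiteness of $H$-orbits on $G/B$, and type $A$ boundedness of DL multiplicities) are standard.
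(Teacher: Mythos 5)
Your approach is genuinely different from the paper's: you propose to work through Deligne--Lusztig virtual characters and a reduction to the homogeneous case $G/H$, whereas the paper goes through Lusztig's character sheaves (for $GL_n$-type groups) applied directly to the not-necessarily-homogeneous spherical $X$, plus a representation-theoretic reduction from general type $A$ to type $GL$. Unfortunately there is a serious gap in your plan that I do not see how to close along the route you describe.

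The central difficulty that your outline does not address is the ``small $q$'' problem. Both your approach and the paper's produce, at their core, an estimate of the following shape: a certain multiplicity equals a Frobenius trace on a complex whose top-weight part is controlled (by a geometric count of orbits/components), but whose lower-weight parts are only controlled asymptotically. Deligne's Weil II bounds give eigenvalue estimates $|\zeta_i|\leq q^{w_i/2}$, so the multiplicity is bounded by its geometric ``leading term'' only up to an error that decays as $q\to\infty$. This shows $\limsup_{n}\langle\rho,\C[X(\mathbb F_{q^n})]\rangle$ is bounded, but not the multiplicity for the actual field $\mathbb F_q$ one started with. The paper handles this with a separate arithmetic observation (Lemma \ref{lem:arith.prog}): the sequence $n\mapsto\langle\rho,\C[X(\mathbb F_{q^n})]\rangle$ is a ratio of exponential-type functions taking integer values, hence takes finitely many values, hence is periodic; periodicity then pulls the $\limsup$ bound back to $n=1$. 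Your proposal invokes ``Deligne's Weil conjecture estimates bound each cohomological contribution'' but supplies no mechanism to convert an asymptotic bound into a uniform one. Without something playing the role of the periodicity/integrality argument, the proof does not close.

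Two further issues are worth flagging. First, the assertion that in type $A$ every irreducible $\rho$ is a ``$\pm1$-combination, of bounded length, of Deligne--Lusztig virtual characters'' is not accurate as stated even for $GL_n$: irreducibles of $GL_n(\mathbb F_q)$ are rational (not $\pm1$) combinations of the $R_T^G(\theta)$, mediated by characters of symmetric groups, and the coefficients are not universally $\pm1$. What is true, and what the paper exploits, is the stronger statement (Shoji's theorem) that for type $GL$ each irreducible character is, up to a root of unity, the trace function of a character sheaf equipped with a pure Weil structure; this gives a single geometric object to push forward rather than a sum over $(T,\theta)$. Second, for type $A$ groups that are not of type $GL$ (e.g.\ $SL_n$, $PGL_n$, unitary groups), the character-sheaf/almost-character dictionary is not as clean (cuspidal data and nontrivial families can appear), and the paper's explicit reduction via Lemmas \ref{lem:hat}, \ref{lem:mult.iso}, \ref{lem:mult.rad}, \ref{lem:c.rad}, \ref{lem:c.iso} is doing real work there; your proposal treats all of type $A$ on equal footing and glosses over this.

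Your preliminary reduction to the homogeneous case via Lang--Steinberg is fine (and is a legitimate alternative to the paper's direct treatment of $X$), but it does not help with the gap above.
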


\begin{remark} We prove a stronger statement (with an explicit bound on the multiplicity) in Theorem \ref{thm:main}.
\end{remark}
\subsection{Motivation}
The conjectures above are motivated by the analysis of principal series representations. The multiplicity of a principal series representation is the dimension of the space of  $B$-equivariant functions on $X$ with respect to corresponding character. This dimension can be bounded in terms of the number of $B$-orbits on $X$.

Experience shows that the multiplicities of  principal series representation bound the multiplicities of all irreducible representations. A possible explanation for this phenomenon is that representations tend to become principal series representations after a base change, and that base change should not affect the multiplicities or at least affect it in  a controllable manner.

\subsection{Idea of the proof}
The proof for $GL_n$ and its forms is based on Lusztig's theory of character sheaves, which is simpler for $GL_n$ than for other groups. In particular, any character sheaf of $GL_n$ is induced from a character sheaf of a torus. Using this, we prove a categorified version of Theorem \ref{thm:intro.main} by categorifying the argument for the principal series. Namely, instead of bounding the multiplicity, we bound the weight, the homological dimension, and the highest homology of a certain $\ell$-adic complex that corresponds to the multiplicity (see Lemma \ref{lem:mult.cx}). This already gives a uniform bound on the limit of the multiplicities when the field is $F=\mathbb F_{p^n}$ and $n$ tends to infinity. Since the multiplicities have a geometric nature {and take integer values}, we show that the sequence of multiplicities is periodic in $n$ (see Lemma \ref{lem:arith.prog}), so the bound holds also for small $n$.

The reduction of general type A case to the case of $GL_n$ is done by classical representation theoretic technique. This technique allows us to control the multiplicity when a finite group 
is replaced by a subgroup of a given index, by a co-central subgroup, or by its extension (see Lemmas \ref{lem:mult.iso}, \ref{lem:mult.rad}). Any group of type A can be related to a form of product of general linear group by such modifications (see Lemma \ref{lem:hat}).

\subsection{Structure of the paper}
In \S\ref{sec:GL} we prove an explicit version of  Theorem \ref{thm:intro.main} for forms of product of general linear groups (see Theorem \ref{thm:GLn}). {In \S\S\ref{ssec:GL.form} we formulate this version.} In \S\S\ref{sec:char.shif} we give an overview of the relevant parts of the theory of character sheaves{; these are summed up in Theorem \ref{thm:CS}}. In \S\S\ref{sec:mult.com} we introduce an $\ell$-adic complex that measures the multiplicity (see Lemma \ref{lem:inp}) and give bounds on this complex (see Lemma \ref{lem:mult.cx}) . In \S\S\ref{sec:GL.pf} we prove the theorem.

In \S\ref{sec:red} we deduce an explicit version of Theorem \ref{thm:intro.main} for all group of type $A$ (see Theorem \ref{thm:main}).

{In Appendix \ref{app:chcs} we explain how to deduce Theorem \ref{thm:CS} from the existing literature.}
{
\subsection{Acknowledgement}
We thank Roman Bezrukavnikov,  Victor Ginzburg, and George Lusztig for explaining us the theory of character sheaves and its relation with irreducible representations of finite groups of Lie type.

We also thank Joseph Bernstein and Eitan Sayag for useful conversations.

{A.A. was partially supported by ISF grant 687/13 
and a Minerva foundation grant. N.A. was partially supported
by NSF grant DMS-1303205. Both of us were partially supported by
BSF grant 2012247.}
}
\section{The $\GL$ case}\label{sec:GL}

{\subsection{Formulation of the main result for the $GL$ case}\label{ssec:GL.form}}
In order to formulate the theorem in the $\GL_n$ case, we use the following

\begin{notation}

 Let $G$ be an algebraic group. Assume $G$ acts on an algebraic variety $Y$.
\begin{itemize}
\item  Let $Y_G=\left\{ (g,y) \in G \times Y \mid g \cdot y=y \right\}$.
\item Assume $G$ is reductive with Borel $B$. Denote the number of absolutely irreducible components of $(Y \times G/B)_G$ by $c(G,Y)$.
\end{itemize}
\end{notation}
By \cite[1.1]{Br} (following \cite{Vi,Kn}), $Y$ is spherical if and only if there are finitely many $B$ orbits in $Y$. This is also equivalent to $\dim (Y \times G/B)_G=\dim G$.

\begin{definition} Let $G$ be an algebraic group over a field $k$. We say that $G$ has type $\GL$ if $G_{\overline{k}}$ is a product of general linear groups.
\end{definition}

The main result of this section {is:}

\begin{theorem} \label{thm:GLn} Let $G$ be an algebraic group of type $\GL$ over a finite field $\mathbb{F}_q$. Let $X$ be a spherical $G$-space. Then, for every irreducible representation $\rho$ of $G(\mathbb{F}_q)$,
\[
\dim\Hom(\rho,\mathbb{C}[X(\mathbb{F}_q)] \leq c(G,X).
\]
\end{theorem}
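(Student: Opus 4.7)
The plan is to translate the multiplicity $m_\rho := \dim\Hom(\rho,\mathbb{C}[X(\mathbb{F}_q)])$ into the trace of Frobenius on an $\ell$-adic complex supported on the fixed-point scheme $(X\times G/B)_G$, and then to control that trace using sphericality together with the special structure of character sheaves for groups of type $\GL$.

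Step one is to invoke Theorem \ref{thm:CS}: for $G$ of type $\GL$, every irreducible character $\chi_\rho$ is, up to sign, the characteristic function of a character sheaf $A_{T,\theta}$ on $G$, and $A_{T,\theta}$ is induced from a Kummer sheaf $\mathcal{L}_\theta$ on a rational maximal torus $T\subset G$ via the standard correspondence
\[
G \xleftarrow{\,p\,} \widetilde{G} \xrightarrow{\,q\,} T,\qquad \widetilde{G}=\{(g,hB)\in G\times G/B : h^{-1}gh\in B\},
\]
with $q$ reading off the image in $T=B/U$. This is precisely where type $\GL$ is crucial: for a general reductive group, character sheaves are not all obtained by such torus induction.

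Step two, which should be Lemma \ref{lem:inp}, rewrites
\[
m_\rho \;=\; \frac{1}{|G(\mathbb{F}_q)|}\sum_{(g,x)\in X_G(\mathbb{F}_q)} \chi_\rho(g)
\]
as $\pm$ the (appropriately normalized by $|T(\mathbb{F}_q)|$ and the Deligne--Lusztig sign) trace of Frobenius on $R\Gamma_c\bigl((X\times G/B)_G,\,\pi^*\mathcal{L}_\theta\bigr)$, where $\pi:(X\times G/B)_G\to T$ is induced by $q$. The identification follows from proper base change applied to the cartesian square that identifies $X_G\times_G \widetilde{G}$ with $(X\times G/B)_G$. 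Step three, following Lemma \ref{lem:mult.cx}, bounds this complex: it is pure of the expected weight, its cohomology is concentrated in a controlled range, and sphericality forces $\dim(X\times G/B)_G=\dim G$ with exactly $c(G,X)$ top-dimensional absolutely irreducible components. Combined with Deligne's purity, this yields the asymptotic estimate $|m_\rho(\mathbb{F}_{q^n})|\le c(G,X)+o(1)$ as $n\to\infty$, since strictly smaller weights contribute Frobenius eigenvalues of strictly smaller modulus that become negligible after normalization.

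Finally, because $m_\rho(\mathbb{F}_{q^n})$ is a non-negative integer and is known to be eventually periodic in $n$ by Lemma \ref{lem:arith.prog}, the asymptotic bound promotes to the uniform bound $m_\rho\le c(G,X)$ for every $n$. I expect the main obstacle to be Step two: pinning down the precise geometric complex, fixing its normalization, and verifying via proper base change that its trace of Frobenius equals $m_\rho$ with no stray factors or signs depending on $\rho$. Once this identification is in place, Steps three and four are essentially formal consequences of Deligne's purity, the Grothendieck--Lefschetz trace formula, and the sphericality hypothesis packaged in the definition of $c(G,X)$.
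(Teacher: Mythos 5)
Your proposal follows the paper's proof very closely: reduce to induced character sheaves via Theorem \ref{thm:CS}, realize the multiplicity as a Frobenius trace on $R\Gamma_c$ of a Kummer local system pulled back to $(X\times G/B)_G$, bound the weights and cohomological degrees using sphericality and Deligne's theorems, and then upgrade the asymptotic bound to $n=1$ using integrality and periodicity. This is the same decomposition into Lemmas \ref{lem:inp}, \ref{lem:K}, \ref{lem:mult.cx}, and \ref{lem:arith.prog}.

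Two corrections are needed, one of them essential. First, in your Step~2 the normalization is by $|G(\mathbb{F}_q)|$ (as in your own first displayed formula), not by $|T(\mathbb{F}_q)|$, and there is no Deligne--Lusztig sign: the paper's Theorem \ref{thm:CS} chooses a weight-zero Weil structure $\alpha$ on $\mathcal{M}$ so that $\chi_{\mathcal{M},\alpha}=\rho$ exactly, with no stray sign or root-of-unity factor. Also, the complex $Rq_!\left(\mathcal{M}\otimes^L Rf_!\underline{\mathbb{Q}_\ell}\right)$ is not pure but only mixed of weight $\leq 0$; what the argument needs is precisely a one-sided weight bound together with the bound on the top compactly supported cohomology by the number of absolutely irreducible components.

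Second, and more importantly, ``eventually periodic'' is not enough: the value you want to bound is $f(1)$, which lies outside any tail. If $f$ were only eventually periodic, the asymptotic estimate $\limsup_{n\to\infty} f(n)\leq c(G,X)$ would say nothing about $f(1)$. The actual content of Lemma \ref{lem:arith.prog} is full periodicity on $\mathbb{Z}_{>0}$: because the Zariski closure of the semigroup $\{v^n : n>0\}$ is a group (Lemma \ref{lem:alg.smi}) and the rational function is constant on its connected components, the sequence is periodic from $n=1$ onwards, so $f(1)$ recurs infinitely often and hence $f(1)\leq\limsup_{n\to\infty} f(n)$. This is the genuinely delicate point that makes the descent from large $n$ to $n=1$ work, and it deserves to be stated precisely rather than as an ``eventually periodic'' shortcut.
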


\subsection{Character sheaves}\label{sec:char.shif}

The proof of Theorem \ref{thm:GLn} is based on Lusztig's theory of character sheaves. We very briefly summarize this theory for $\GL_n$, assuming the reader is familiar with perverse sheaves and Weil sheaves (see \cite{BBD,KW}). Fix a prime power $q$, a prime $\ell$ prime to $q$, and an identification $\mathbb{C} = \overline{\mathbb{Q}_\ell}$. For a complex $\mathcal{F}$ of $\ell$-adic sheaves on an algebraic variety $X$ together with a Weil structure $\alpha$, we denote the corresponding function by $\chi_{\mathcal{F},\alpha}:X(\mathbb{F}_q) \rightarrow \overline{\mathbb{Q}_\ell}\simeq \mathbb{C}$.

\begin{definition} \label{defn:CS} Let $G$ be a reductive group over $\overline{\mathbb{F}_q}$ with Borel $B$ and maximal torus $T \subset B$. Denote the projection $B \rightarrow B/[B,B]=T$ by $\pi_B$.\begin{enumerate}
\item A character sheaf on $T$ is a one-dimensional $\mathbb{Q}_\ell$ local system $\mathcal{L}$ on $T$ such that $\mathcal{L} ^{\otimes n}$ is trivial, for some $n$ prime to $q$.
\item Consider the diagram $G \leftarrow^{\pi} (G/B)_G \leftarrow^{\tau_1} V \rightarrow^{\tau_2} T$, where $V=\left\{ (g,h) \mid ghg ^{-1} \in B\right\}$, $\tau_1(g,h)=(h,B^g)$, $\tau_2(g,h)=\pi_B(ghg ^{-1})$, and $\pi$ is the projection. For a character sheaf $\mathcal{L}$ on $T$, there is a local system $\mathcal{K}$ on $(G/B)_G$ such that $\tau_1^* \mathcal{K}\cong \tau_2^* \mathcal{L}$. Let $\ind_T \mathcal{L} =R\pi_* \mathcal{K}$.
\item Since $\pi$ is semi-small, $\ind_T \mathcal{L}$ is perverse. An induced character sheaf on $G$ is an irreducible direct summand of $\ind_T \mathcal{L}$, for some character sheaf $\mathcal{L}$ on $T$.
\end{enumerate}
\end{definition}

\begin{theorem} \label{thm:CS} Let $G$ be an algebraic group of type $\GL$ defined over $\mathbb{F}_q$. For every irreducible character $\rho$ of $G(\mathbb{F}_q)$, there is an induced character sheaf $\mathcal{M}$ together with a Weil structure $\alpha :\Frob_q ^* \mathcal{M} \rightarrow \mathcal{M}$ which is pure of weight zero, such that $\chi_{M,\alpha}=\rho$.

\end{theorem}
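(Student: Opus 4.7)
The plan is to deduce Theorem~\ref{thm:CS} from Lusztig's theory of character sheaves, using that this theory simplifies substantially in type $A$. The argument breaks into three steps: reducing from a form to the split case, invoking the bijection between irreducible characters and character sheaves with Weil structure, and using the type-$A$ specific fact that every character sheaf is of principal series type.

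First, I would reduce to split $G = \GL_n$ over $\mathbb{F}_q$. Since $G$ is of type $\GL$, $G_{\overline{\mathbb{F}_q}}$ is a product of $\GL_{n_i}$'s, and the Frobenius may permute factors; collecting Galois orbits presents $G \simeq \prod_i \Res_{\mathbb{F}_{q^{m_i}}/\mathbb{F}_q} \GL_{n_i}$. Weil restriction is compatible with parabolic induction and with the formation of character sheaves, and irreducible characters of $(\Res_{\mathbb{F}_{q^m}/\mathbb{F}_q} \GL_n)(\mathbb{F}_q) = \GL_n(\mathbb{F}_{q^m})$ are in bijection with those of $\GL_n$ over $\mathbb{F}_{q^m}$. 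Hence it suffices to treat split $\GL_n$ over a finite field.

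Second, I would invoke Lusztig's classification. For $G = \GL_n$, Lusztig's theory (together with Shoji's confirmation of the character sheaf conjecture, which is unconditional in type $A$) yields a bijection between irreducible characters $\rho$ of $G(\mathbb{F}_q)$ and isomorphism classes of pairs $(\mathcal{M}, \alpha)$, where $\mathcal{M}$ is a $\Frob_q$-stable character sheaf on $G$ and $\alpha:\Frob_q^*\mathcal{M} \to \mathcal{M}$ is a Weil structure normalized to be pure of weight zero, such that $\chi_{\mathcal{M},\alpha} = \rho$. The key feature of type $A$ used here is \emph{cleanness}, which forces the proportionality scalar between $\chi_{\mathcal{M},\alpha}$ and $\rho$ (which is a priori a root of unity) to equal $1$.

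Third, I would use that every character sheaf on $\GL_n$ is of principal series type. In Lusztig's general classification, character sheaves are parameterized by cuspidal data on Levi subgroups, but for $\GL_n$ the only cuspidal character sheaves live on tori. Consequently, parabolic induction from any Levi reduces to Borel induction from the maximal torus, so $\mathcal{M}$ is an irreducible summand of $\ind_T \mathcal{L}$ for some character sheaf $\mathcal{L}$ on $T$, which is exactly what Definition~\ref{defn:CS}(3) demands. The main obstacle is in the second step: carefully tracing the normalization conventions across Lusztig's series of papers and Shoji's work to confirm that the Weil structure can be chosen pure of weight zero and that the equality $\chi_{\mathcal{M},\alpha} = \rho$ holds on the nose rather than merely up to a scalar. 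This is the technical content that the appendix is meant to supply.
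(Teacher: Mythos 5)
Your overall route matches the paper's: invoke Lusztig's classification of irreducible characters, combine it with Shoji's matching of almost characters with character sheaves, and use the type-$A$-specific fact that there are no non-toral cuspidal character sheaves so every character sheaf is a summand of $\ind_T \mathcal{L}$. The second and third steps track the appendix accurately, including the observation that the interesting content is in matching normalizations.

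However, your first step contains a genuine error. You claim that a group of type $\GL$ over $\mathbb{F}_q$ is isomorphic to $\prod_i \Res_{\mathbb{F}_{q^{m_i}}/\mathbb{F}_q}\GL_{n_i}$, i.e., that the arithmetic Frobenius can only act by permuting the geometric $\GL$-factors. This misses the outer (transpose-inverse) twist: unitary groups $U_n$ and their Weil restrictions are forms of type $\GL$ over $\mathbb{F}_q$ that are \emph{not} Weil restrictions of split $\GL_n$. Indeed, the paper's Lemma \ref{lem:hat} explicitly includes factors $\Res_{E_j/\mathbb{F}_q}(U_{n_j})$ in its list. So your reduction to the split case is both insufficient and, as it happens, unnecessary: the paper never makes such a reduction. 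The statement that all character sheaves are induced is a \emph{geometric} statement (over $\overline{\mathbb{F}_q}$), so it sees only $\prod \GL_{n_i}$, while Lusztig's classification of $\Irr G(\mathbb{F}_q)$ and Shoji's theorem are formulated and proved for arbitrary $\mathbb{F}_q$-forms, and the only type-$A$ input needed is that the relevant Weyl group $W_{L,n}$ is a product of symmetric groups (which also holds in the unitary case).

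One smaller point: you assert that cleanness forces the proportionality scalar between $\chi_{\mathcal{M},\alpha}$ and $\rho$ to equal $1$. The paper does not prove this; it tracks a root of unity $\zeta$ through both Lusztig's definition of almost characters and Shoji's theorem and never claims $\zeta = 1$. The reason this is harmless is that a Weil structure pure of weight zero may be rescaled by any root of unity and remains pure of weight zero, so $\zeta$ can simply be absorbed into $\alpha$. Cleanness is what makes Shoji's theorem available, not what kills the scalar.
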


This theorem follows from \cite{Lus_CS,Lus_book,Sho}. We explain this in Appendix \ref{app:chcs}

\subsection{Bounds on the multiplicity complex} \label{sec:mult.com}
{With the notations of Theorem \ref{thm:GLn},} let $\rho_{\mathbb{C}[X(\mathbb{F}_q)]}$ be the character of the module $\mathbb{C}[X(\mathbb{F}_q)]$. {{By Theorem \ref{thm:CS}, i}n order to prove Theorem \ref{thm:GLn},} it is enough to prove that, for each induced character sheaf $\mathcal{M}$ with {pure} Weil structure $\alpha$ {of weight zero},
 we have $\langle \chi_{\mathcal{M},\alpha},\rho_{\mathbb{C}[X(\mathbb{F}_q)]} \rangle \leq c(G,X)$.

Fix such $\mathcal{M},\alpha$. By definition, there is a character sheaf $\mathcal{L}$ on a maximal torus $T$ such that $\mathcal{M}$ is a direct summand of $\ind_T \mathcal{L}$. Choose a Borel $B \supset T$ and consider the diagram
\[
\xymatrix{& (X \times G/B)_G \ar@{->}[dr]^{\widetilde{f}} \ar@{->}[dl]_{\widetilde{\pi}} \ar@{->}[dd]^{p} & & \\ X_G \ar@{->}[dr]_{f} & & (G/B)_G \ar@{->}[dl]^{\pi} & V \ar@{->}[l]_{\ \ \ \ \ \tau_2} \ar@{->}[d]^{\tau_1 } , \\ & G \ar@{->}[d]^{q} & & T \\ & pt & & &}
\]
where $f,\pi,\widetilde{f},\widetilde{\pi},q,p$ are the projections and $\tau_i$ are as in Definition \ref{defn:CS}. Let $\mathcal{K}$ be the local system on $(G/B)_G$ such that $\tau_2^* \mathcal{K} = \tau_1^* \mathcal{L}$, and let $\widetilde{\mathcal{K}}=\widetilde{f}^* \mathcal{K}$. Denote the trivial one-dimensional local system on $X_G$ by $\underline{\mathbb{Q}_\ell}$, and give it the identity Weil structure.

\begin{lemma} \label{lem:inp} {Let $\beta$ be the natural Weil structure on }$Rq_! \left( \mathcal{M} \otimes^L Rf_! \underline{\mathbb{Q}_\ell} \right)$.
 Then, $\chi_{Rq_! \left( \mathcal{M} \otimes^L Rf_! \underline{\mathbb{Q}_\ell} \right), \beta}=|G(\mathbb{F}_q)| \cdot  \langle \chi_{\mathcal{M},\alpha},\rho_{\mathbb{C}[X(\mathbb{F}_q)]} \rangle $.
\end{lemma}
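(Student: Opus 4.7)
The plan is to apply the Grothendieck--Lefschetz trace formula twice together with the multiplicativity of the sheaf-to-function correspondence under derived tensor product; the statement is then a routine bookkeeping exercise that matches the two sides. I do not foresee any genuine difficulty here, and the only point to keep track of is simply that the Weil structure $\beta$ is, by construction, compatible with all the manipulations below -- which is precisely what the word ``natural'' encodes.

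Concretely, I would first apply Grothendieck--Lefschetz to $q\colon G\to\mathrm{pt}$ to rewrite the left-hand side as $\sum_{g\in G(\mathbb{F}_q)}\chi_{\mathcal{M}\otimes^L Rf_!\underline{\mathbb{Q}_\ell}}(g)$, and then use multiplicativity of $\chi$ under $\otimes^L$ to split this further as $\sum_{g}\chi_{\mathcal{M},\alpha}(g)\cdot\chi_{Rf_!\underline{\mathbb{Q}_\ell}}(g)$.

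Next, I would observe that the fiber of $f\colon X_G\to G$ over $g$ is the fixed-point scheme $X^g=\{x\in X:g\cdot x=x\}$. Proper base change then identifies the stalk $(Rf_!\underline{\mathbb{Q}_\ell})_g$ with $R\Gamma_c(X^g,\underline{\mathbb{Q}_\ell})$, and a second application of Grothendieck--Lefschetz, this time to $X^g\to\mathrm{pt}$, yields $\chi_{Rf_!\underline{\mathbb{Q}_\ell}}(g)=|X^g(\mathbb{F}_q)|$.

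It then remains to recognize the resulting sum as a multiplicity. The character of the permutation representation $\mathbb{C}[X(\mathbb{F}_q)]$ evaluated at $g$ is precisely $|X^g(\mathbb{F}_q)|$, and since this is a non-negative integer the complex conjugation appearing in the standard Hermitian inner product of characters has no effect. Therefore $|G(\mathbb{F}_q)|\cdot\langle\chi_{\mathcal{M},\alpha},\rho_{\mathbb{C}[X(\mathbb{F}_q)]}\rangle=\sum_g\chi_{\mathcal{M},\alpha}(g)\cdot|X^g(\mathbb{F}_q)|$, which matches the previous computation and yields the lemma.
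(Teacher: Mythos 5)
Your proof is correct and follows essentially the same route as the paper's: apply Grothendieck--Lefschetz to $q$, use multiplicativity of the sheaf-to-function correspondence under $\otimes^L$, and then apply Grothendieck--Lefschetz again (via proper base change on the fibers of $f$) to identify $\chi_{Rf_!\underline{\mathbb{Q}_\ell}}(g)$ with $|f^{-1}(g)(\mathbb{F}_q)|=\rho_{\mathbb{C}[X(\mathbb{F}_q)]}(g)$. The only difference is cosmetic -- you spell out the proper-base-change step and the harmless complex conjugation in the Hermitian pairing, both of which the paper leaves implicit.
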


\begin{proof} By the Grothendieck--Lefschetz trace formula, for every $g\in G(\mathbb{F}_q)$,
\[
\chi_{Rf_! \underline{\mathbb{Q}_\ell}}(g)=\left| f ^{-1} (g){(\bF_q)}\right|=\rho_{\mathbb{C}[X(\mathbb{F}_q)]}(g),
\]
and
$$
\chi_{Rq_! \left( \mathcal{M} \otimes^L Rf_! \underline{\mathbb{Q}_\ell} \right)}
=\sum_{g\in G{(\bF_q)}}\chi_{\mathcal{M} \otimes^L Rf_! \underline{\mathbb{Q}_\ell}}(g)=
\sum_{g\in G{(\bF_q)}}\chi_{\mathcal{M}}(g) \cdot \chi_{Rf_! \underline{\mathbb{Q}_\ell}}(g)=
 |G(\mathbb{F}_q)| \cdot  \langle \chi_{\mathcal{M}}, \rho_{\mathbb{C}[X(\mathbb{F}_q)]} \rangle.$$

\end{proof}
{We now study the weights and the cohomologies of $q_! \left( \mathcal{M} \otimes f_! \underline{\mathbb{Q}_\ell} \right)$}
\begin{lemma}\label{lem:K} $\ind_T \mathcal{L} \otimes^L Rf_! \underline{\mathbb{Q}_\ell}= Rp_! \widetilde{\mathcal{K}}$.
\end{lemma}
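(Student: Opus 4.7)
The plan is to recognize the relevant square in the diagram as cartesian and then combine proper base change with the projection formula.

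First I would check that the square
\[
\xymatrix{
(X\times G/B)_G \ar[r]^-{\widetilde f}\ar[d]_-{\widetilde\pi} & (G/B)_G \ar[d]^-{\pi} \\
X_G \ar[r]_-{f} & G
}
\]
is cartesian. This is essentially a set-theoretic verification: a point of $X_G\times_G(G/B)_G$ is a triple $(g,x,hB)$ with $g\cdot x=x$ and $g\in hBh^{-1}$, which is exactly the data of a point of $(X\times G/B)_G$. This is the only place where the specific description of the spaces enters, and it is routine.

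Given that, since $G/B$ is proper (hence $\pi$ is proper and $R\pi_!=R\pi_*$), proper base change gives
\[
f^{*}R\pi_{!}\mathcal{K}\;=\;R\widetilde\pi_{!}\,\widetilde f^{*}\mathcal{K}\;=\;R\widetilde\pi_{!}\,\widetilde{\mathcal{K}}.
\]
Combining this with the projection formula $Rf_{!}(\underline{\mathbb{Q}_\ell}\otimes^{L}f^{*}A)=Rf_{!}\underline{\mathbb{Q}_\ell}\otimes^{L}A$ applied to $A=\ind_T\mathcal{L}=R\pi_{!}\mathcal{K}$, I get
\[
\ind_T\mathcal{L}\otimes^{L}Rf_{!}\underline{\mathbb{Q}_\ell}\;=\;Rf_{!}\bigl(f^{*}R\pi_{!}\mathcal{K}\bigr)\;=\;Rf_{!}R\widetilde\pi_{!}\,\widetilde{\mathcal{K}}.
\]
Finally, since $p=f\circ\widetilde\pi$ by the commutativity of the diagram, the composition $Rf_{!}R\widetilde\pi_{!}$ equals $Rp_{!}$, yielding the claimed identity $\ind_T\mathcal{L}\otimes^{L}Rf_{!}\underline{\mathbb{Q}_\ell}=Rp_{!}\widetilde{\mathcal{K}}$.

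There is no real obstacle here: the argument is a standard two-line manipulation with the six-functor formalism once the cartesian square is identified. The only mildly non-trivial point is remembering that $\pi$ is proper (so that base change for $R\pi_{!}$ applies without issue), which holds because $(G/B)_G\to G$ is the pullback along the trivial bundle $G\times G/B\to G$ of a projective morphism, so $\pi$ is itself proper.
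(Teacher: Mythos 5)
Your proof is correct and uses the same two ingredients as the paper (the projection formula and base change on the cartesian square), but you factor $p$ the other way: you write $p=f\circ\widetilde\pi$ and apply base change to pull $R\pi_!\mathcal K$ back along $f$, whereas the paper factors $p=\pi\circ\widetilde f$, first applies the projection formula for $\widetilde f$ to produce $\mathcal K\otimes^L R\widetilde f_!\underline{\mathbb{Q}_\ell}$, then uses base change and a second projection formula for $\pi$. Your route is marginally tidier in that it needs the projection formula only once (for $f$), since $\underline{\mathbb{Q}_\ell}\otimes^L f^*A\cong f^*A$ absorbs one tensor; the paper pays with an extra projection-formula step but avoids explicitly invoking the cartesianity of the square as a separate check. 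One small remark: you justify base change for $\pi$ by properness of $G/B$, which is true but unnecessary, since base change for $R\pi_!$ against any pullback holds unconditionally; the properness of $\pi$ is only needed to identify $R\pi_!\mathcal K$ with $R\pi_*\mathcal K=\ind_T\mathcal L$ as in Definition \ref{defn:CS}, a point both you and the paper use implicitly.
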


\begin{proof}
\[
Rp_! \widetilde{\mathcal{K}}=R\pi_! R\widetilde{f}_! \widetilde{f}^* \mathcal{K} =R\pi_! \left( \mathcal{K} \otimes^L R\widetilde{f}_! \underline{\mathbb{Q}_\ell} \right) = R\pi_! \left( \mathcal{K} \otimes^L R\widetilde{f}_! \widetilde{\pi}^* \underline{\mathbb{Q}_\ell} \right) =R\pi_! \left( \mathcal{K} \otimes^L \pi ^*Rf_! \underline{\mathbb{Q}_\ell} \right) =
\]
\[
= R\pi_! \mathcal{K} \otimes^L Rf_! \underline{\mathbb{Q}_\ell}=\ind_T \mathcal{L} \otimes^L Rf_! \underline{\mathbb{Q}_\ell}.
\]
\end{proof}

The following is well-known:

\begin{lemma} \label{lem:vanishing} Let $X$ be a variety of dimension $d$ and let $\mathcal{F}$ be a one-dimensional local system on $X$. Then $H^i_c(X,\mathcal{F})=0$ unless $i\in \left\{ 0,\ldots,2d \right\}$ and $\dim H^{2d}_c(X,\mathcal{F})$ is bounded by the number of irreducible components of $X$.
\end{lemma}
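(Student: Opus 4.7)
The plan is to treat the two assertions separately. The vanishing of $H^i_c(X,\mathcal{F})$ for $i \notin \{0,\dots,2d\}$ is a general fact: for $i<0$ there is nothing to prove, while for $i>2d$ I would cite Artin's bound on the cohomological dimension for compactly supported étale cohomology of a variety of dimension $d$. (Alternatively, one can reduce to the case where $X$ is affine of dimension $d$ via an affine stratification and use the usual Artin vanishing in that case, together with the open-closed long exact sequence.)

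For the bound on $\dim H^{2d}_c(X,\mathcal{F})$, I would induct on the number $m$ of $d$-dimensional irreducible components of $X$. In the base case $m=1$ (so $X$ irreducible of dimension $d$, all other components of smaller dimension already contribute nothing to $H^{2d}_c$ by the first part), let $U\subset X$ be the smooth locus, which is open, dense, and connected, and let $Z=X\setminus U$. Since $\dim Z<d$, the open-closed long exact sequence
\[
H^{2d}_c(U,\mathcal{F}|_U) \to H^{2d}_c(X,\mathcal{F}) \to H^{2d}_c(Z,\mathcal{F}|_Z)=0
\]
shows that the middle term is a quotient of $H^{2d}_c(U,\mathcal{F}|_U)$. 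By Poincaré duality for the smooth variety $U$, this last space is dual (up to a Tate twist) to $H^0(U,\mathcal{F}^\vee|_U)$, which is at most one-dimensional since $U$ is connected and $\mathcal{F}^\vee|_U$ is a rank-one local system. Hence $\dim H^{2d}_c(X,\mathcal{F})\le 1$, as required.

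For the inductive step, write $X = X_1 \cup X'$, where $X_1$ is one of the $d$-dimensional components and $X'$ is the closed subvariety consisting of the union of the remaining irreducible components. Setting $U = X\setminus X' \subset X_1$ (an open subvariety of the irreducible variety $X_1$, hence with at most one top-dimensional irreducible component), the open-closed sequence
\[
H^{2d}_c(U,\mathcal{F}|_U) \to H^{2d}_c(X,\mathcal{F}) \to H^{2d}_c(X',\mathcal{F}|_{X'})
\]
together with the base case applied to (the closure of) $U$ and the inductive hypothesis applied to $X'$ (which has one fewer top-dimensional component, and whose lower-dimensional components contribute nothing) bound $\dim H^{2d}_c(X,\mathcal{F})$ by $1 + (m-1) = m$, which is at most the total number of irreducible components of $X$.

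The only mildly delicate point is the base case: one must invoke Poincaré duality on the possibly non-proper smooth variety $U$, and verify that a rank-one local system on a connected variety has at most one-dimensional space of global sections. Both are standard, so I expect no real obstacle; everything else is a routine application of the open-closed long exact sequence and induction.
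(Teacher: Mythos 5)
The paper does not prove this lemma; it simply records it as ``well-known.'' Your argument is correct and is the standard one: the degree bound follows from the general cohomological dimension estimate for compactly supported \'etale cohomology, and the dimension bound on $H^{2d}_c$ follows by induction on the number of top-dimensional irreducible components, using the open-closed long exact sequence to peel off one component at a time and Poincar\'e duality on the smooth locus (a connected smooth variety) in the base case, where a rank-one local system has at most a one-dimensional space of sections. In fact your argument yields the slightly sharper bound by the number of $d$-dimensional components, which of course implies the stated bound. The only cosmetic point: in the inductive step you can apply the base-case argument to $U$ itself (it is a nonempty open subset of the irreducible $X_1$, hence irreducible of dimension $d$); passing to its closure is unnecessary and would in any case not directly control $H^{2d}_c(U,-)$.
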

{The last two lemmas give us:}

\begin{corollary} \label{lem:mult.cx} $Rq_! \left( \mathcal{M} \otimes^L Rf_! \underline{\mathbb{Q}_\ell} \right)$ is a mixed complex of weight $\leq 0$, concentrated in degrees $0,\ldots,2\dim G$, and $\dim H^{2\dim G}(Rq_! \left( \mathcal{M} \otimes^L Rf_! \underline{\mathbb{Q}_\ell} \right)) \leq c(X,G)$.
\end{corollary}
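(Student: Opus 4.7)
The plan is to reduce the assertion to bounds on the compactly supported cohomology of a one-dimensional local system on $(X\times G/B)_G$, and then invoke Lemma \ref{lem:vanishing}. Since $\mathcal{M}$ is by definition a direct summand of $\ind_T\mathcal{L}$ (and the Weil structure $\alpha$ can be chosen so that this realizes $\mathcal{M}$ as a summand in the category of Weil sheaves), the complex $Rq_!\bigl(\mathcal{M}\otimes^L Rf_!\underline{\mathbb{Q}_\ell}\bigr)$ is a direct summand of
\[
Rq_!\bigl(\ind_T\mathcal{L}\otimes^L Rf_!\underline{\mathbb{Q}_\ell}\bigr)=Rq_!Rp_!\widetilde{\mathcal{K}}=R\Gamma_c\bigl((X\times G/B)_G,\widetilde{\mathcal{K}}\bigr),
\]
where the first equality is Lemma \ref{lem:K}. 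Thus it suffices to prove the three claims for $R\Gamma_c\bigl((X\times G/B)_G,\widetilde{\mathcal{K}}\bigr)$ itself, since passing to a direct summand only improves the estimates.

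Next, I apply Lemma \ref{lem:vanishing} to the one-dimensional local system $\widetilde{\mathcal{K}}=\widetilde{f}^*\mathcal{K}$ on $(X\times G/B)_G$. Since $X$ is spherical, $\dim(X\times G/B)_G=\dim G$, as noted immediately after the definition of $c(G,X)$. The lemma then yields that $H^i_c\bigl((X\times G/B)_G,\widetilde{\mathcal{K}}\bigr)$ vanishes outside $\{0,\ldots,2\dim G\}$ and that its top-degree cohomology has dimension at most the number of absolutely irreducible components of $(X\times G/B)_G$, which by definition equals $c(G,X)$. This takes care of the concentration of degrees and of the bound on $H^{2\dim G}$.

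For the weight estimate, I use that $\mathcal{L}$ is of finite order coprime to $q$ and therefore admits a Weil structure that is pure of weight $0$; the same is inherited by $\mathcal{K}$ and by its pullback $\widetilde{\mathcal{K}}$. Deligne's theorem that $Rh_!$ does not raise weights then implies that $R\Gamma_c\bigl((X\times G/B)_G,\widetilde{\mathcal{K}}\bigr)$ is mixed of weight $\leq 0$, and hence so is any direct summand.

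The main obstacle I anticipate is the bookkeeping of Weil structures: one must verify that, under the isomorphism of Lemma \ref{lem:K}, the natural Weil structure on $Rp_!\widetilde{\mathcal{K}}$ matches the one on $\ind_T\mathcal{L}\otimes^L Rf_!\underline{\mathbb{Q}_\ell}$, and that the decomposition of $\ind_T\mathcal{L}$ into character sheaves lifts to a decomposition of Weil sheaves realizing the specified pure weight-zero Weil structure $\alpha$ on $\mathcal{M}$ as a summand. These compatibilities are implicit in Theorem \ref{thm:CS} and Appendix \ref{app:chcs}; granted them, the rest of the argument is a direct application of Lemma \ref{lem:vanishing} and Deligne's weight estimates.
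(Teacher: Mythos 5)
Your proof is correct and follows the same route as the paper's: reduce to the case $\mathcal{M}=\ind_T\mathcal{L}$ via direct summands, apply Lemma \ref{lem:K} to rewrite the complex as $R\Gamma_c\bigl((X\times G/B)_G,\widetilde{\mathcal{K}}\bigr)$, invoke Lemma \ref{lem:vanishing} together with the spherical equality $\dim(X\times G/B)_G=\dim G$ for the degree and top-cohomology bounds, and use that $Rh_!$ does not raise weights for the mixedness claim. You simply spell out a few steps (the weight of $\widetilde{\mathcal{K}}$, the identification of the component count with $c(G,X)$, and the Weil-structure compatibility) that the paper leaves implicit.
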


\begin{proof}
{It is enough to prove the corollary when $\mathcal{M}$ is replaced by $\ind_T \mathcal{L}$. By lemma  \ref{lem:K}, it is enough to bound the weights and cohomologies of $Rp_! \widetilde{\mathcal{K}}$.}
The claim about the weight follows from the fact that push forward with proper support maps mixed sheaves of weight $\leq 0$ to mixed sheaves of weight $\leq 0$. The rest of the claims follow from Lemma \ref{lem:vanishing}.
\end{proof}

\subsection{Proof of {the main result for the $GL$ case (}Theorem \ref{thm:GLn}{)}} \label{sec:GL.pf}
{In order to complete the proof of Theorem \ref{thm:GLn} we use:}
\begin{lemma} \label{lem:arith.prog} Suppose $\phi \in \C(x_1,\dots,x_m)$ is a rational function. Let $v\in(\C^\times)^m$ such that $\phi$ is regular at $v^n$, for all $n\in \mathbb{Z}_{> 0}$, and the set $\{\phi(v^n)|n\in \mathbb Z_{>0}\}$ is finite. Then the function  $n\mapsto \phi(v^n)$ is periodic on $\Z.$  
\end{lemma}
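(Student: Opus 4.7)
The plan is to show that $n\mapsto \phi(v^n)$ factors through $\Z\to\Z/k\Z$ for some $k$, by studying the Zariski closure of the cyclic subgroup generated by $v$. Let $H\subset (\C^\times)^m$ be the Zariski closure of $\{v^n \mid n\in\Z_{>0}\}$. By standard arguments (the Zariski closure of a sub-semigroup of an algebraic group that generates a subgroup is itself a closed subgroup), $H$ is a closed algebraic subgroup of $(\C^\times)^m$, hence is diagonalizable. Consequently $H$ has finitely many irreducible components, each a coset of the identity component $H^0$, which is a subtorus. Setting $k:=[H:H^0]$, the image of $v$ in the finite quotient $H/H^0$ generates it (as $\{v^n\}$ is dense in $H$), so the component $C_j := v^j H^0$ containing $v^n$ depends only on $j\equiv n\pmod k$.

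The main step is to show that $\phi$ is constant on each component $C_j$. I would first verify that $\{v^{kn}\mid n\in\Z_{>0}\}$ is Zariski dense in $H^0$: its closure $H'$ is a closed subgroup of $H^0$, and $\bigcup_{j=0}^{k-1}v^j H'$ is a closed subset of $H$ containing $\{v^n \mid n\in\Z_{>0}\}$, hence equals $H$; since $H$ is equidimensional of dimension $\dim H^0$ and $H^0$ is irreducible, this forces $H'=H^0$. It follows that for each $0\le j <k$, the set $S_j:=\{v^n \mid n\in\Z_{>0},\,n\equiv j\pmod k\}$ is Zariski dense in $C_j$. By hypothesis, $\phi$ is regular at every point of $S_j$ and takes only finitely many values there. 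Since any rational function on an irreducible variety that is regular on and assumes only finitely many values on a Zariski-dense subset must be constant, $\phi\equiv c_j$ on $C_j$ for some $c_j\in\C$.

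Combining these steps, $\phi(v^n)=c_{n\bmod k}$ for all $n\in\Z_{>0}$, which is periodic with period dividing $k$ and extends uniquely to a periodic function on all of $\Z$. The main technical ingredients are the structural fact that $H$ is a closed subgroup of the torus (hence diagonalizable, with components given by cosets of $H^0$) together with the density of $\{v^{kn}\}$ in $H^0$; these form the core of the argument, but being entirely standard they should not pose a serious obstacle.
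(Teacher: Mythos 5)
Your proof follows essentially the same approach as the paper's: take the Zariski closure $H$ of $\{v^n\}$, observe it is a closed algebraic subgroup of the torus (the paper derives this from the Ax--Grothendieck theorem, via Lemma~\ref{lem:alg.smi}), deduce that $\phi$ is constant on each irreducible component of $H$, and conclude periodicity by factoring $n\mapsto\phi(v^n)$ through the finite quotient $H/H^0$. Your write-up simply spells out the density argument for the cosets of $H^0$ that the paper's terse version leaves implicit.
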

{For the proof, we use the following lemma, which is an immediate corollary of the Ax-Grothendieck theorem:
\begin{lemma} \label{lem:alg.smi}
Let $G$ be an algebraic group and $H\subset G$ be a Zariski-closed sub-semigroup. Then $H$ is a subgroup.
\end{lemma}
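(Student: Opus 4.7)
The plan is to apply Ax--Grothendieck to the left-translation maps on $H$. Since the statement is purely about the scheme structure of $H$ inside $G$, I would first pass to an algebraic closure: if $H\subset G$ is a Zariski-closed sub-semigroup defined over $k$, then $H_{\overline k}\subset G_{\overline k}$ is again a Zariski-closed sub-semigroup, and the property of being a subgroup descends from $\overline k$ to $k$. So assume $k$ is algebraically closed and $H$ is nonempty (otherwise the statement is vacuous).

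For any $h\in H(k)$, the left-multiplication map $L_h\colon G\to G$, $x\mapsto hx$, is an automorphism of $G$ as a variety. Since $H$ is a sub-semigroup, $L_h$ sends $H$ into $H$, and hence restricts to a morphism $L_h\colon H\to H$ of the finite-type $k$-scheme $H$ to itself. This restriction is injective on $k$-points because $L_h$ is already injective on $G(k)$. By the Ax--Grothendieck theorem, any injective endomorphism of a finite-type scheme over an algebraically closed field is surjective, so $L_h\colon H\to H$ is surjective; equivalently, $hH=H$.

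From $hH=H$ I extract the group structure: there exists $e\in H$ with $he=h$, and cancelling $h$ in the ambient group $G$ gives $e=e_G$, so $e_G\in H$; applying $hH=H$ once more to $e_G\in H$ produces $h'\in H$ with $hh'=e_G$, i.e.\ $h^{-1}\in H$. Thus $H$ contains the identity of $G$ and is closed under inversion; combined with the given closure under multiplication, this exhibits $H$ as a subgroup of $G$.

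There is essentially no obstacle here beyond verifying the hypotheses of Ax--Grothendieck, which are automatic since $H$ is a Zariski-closed subscheme of a finite-type algebraic group. The only conceptual point to notice is that $L_h$ must be considered as an endomorphism of $H$ itself, not of $G$, so that surjectivity of $L_h|_H$ gives information internal to $H$.
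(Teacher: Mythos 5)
Your proof is correct and is exactly the argument the paper has in mind: the paper states the lemma as an ``immediate corollary of the Ax--Grothendieck theorem'' without details, and your filling-in via the injective left-translation endomorphisms $L_h\colon H\to H$ (surjective by Ax--Grothendieck, hence $hH=H$, giving identity and inverses) is the intended route. The only cosmetic caveat is to take $H$ nonempty (as it is in the paper's application, where $H$ is the closure of $\{v^n\mid n>0\}$), since the empty set is vacuously a closed sub-semigroup but not a subgroup.
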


\begin{proof}[Proof of lemma \ref{lem:arith.prog}]
Let $T$ be the Zariski closure of the set $\{v^n|n\in \mathbb Z_{>0}\}$ in $(\C^\times)^m$. By Lemma \ref{lem:alg.smi}, $T$ is an algebraic group. The assumptions imply that $\phi|_T$ takes finitely many values on its domain of definition. Hence, $\phi|_T$ is regular and constant on connected components. Thus, the function $n\mapsto \phi(v^n)$ factors through a homomorphism to the finite group $\pi_0(T)$. This implies the assertion.
\end{proof}
}

\begin{proof}[Proof of Theorem \ref{thm:GLn}] Let $A^\bullet=Rq_!(\mathcal{M} \otimes^L Rf_! \underline{\mathbb{Q}_\ell})$. By Lemma \ref{lem:inp}, we have that, for all $n\in \mathbb{N}$,
\[
f(n):=\frac{\trace(\alpha^n \mid A^\bullet)}{|G(\mathbb{F}_{q^n})|}=\langle \chi_{\mathcal{M},\alpha^n}, \rho_{\mathbb{C}[X(\mathbb{F}_q)]} \rangle.
\]
Since $\chi_{\mathcal{M},\alpha ^n}$ is the character of a representation, {the RHS is an integer}.

{We have to show that $f(1)\leq c(X,G)$.}
Let $\zeta_1,\dots,\zeta_r $ be the eigenvalues of $\alpha$ on $A^\bullet$ (including repetitions). By Lemma  \ref{lem:mult.cx}, we can assume that $$q^{\dim G}=|\zeta_1|=\cdots=|\zeta_t|>|\zeta_{t+1}|\geq\cdots\geq|\zeta_{r}|,$$ for some $t\leq c(X,G)$. Thus, there is a linear functional $Q\in (\C^{r-t})^*$ such that $$\trace(\alpha^n \mid A^\bullet)=\zeta_1^n+\cdots \zeta_t^n+Q(\zeta_{t+1}^n,\dots,\zeta_r^n).$$

  There is also a linear functional  $P\in (\C^m)^*$ and numbers $\xi_1,\dots,\xi_m\in \C^{\times}$ satisfying $|\xi_i|<q^{\dim G}$ such that $$|G(\mathbb{F}_{q^n})|=q^n+P(\xi_1^n,\dots,\xi_m^n).$$ {We obtain

$$f(n)=\frac{\zeta_1^{n}+\cdots \zeta_t^{n}+Q(\zeta_{t+1}^{n},\dots,\zeta_r^{n})}{q^{n}+P(\xi_1^{n},\dots,\xi_m^{n})},$$
so $\limsup_{n\to \infty}f(n)\leq t$. Since the values of $f$ are integers, this implies that $f$ has finitely many values.}
Thus, by Lemma \ref{lem:arith.prog}, $f$ is periodic. {Therefore,
$$f(1)\leq\limsup_{n\to \infty}f(n)\leq t\leq c(X,G). $$}

\end{proof}

\section{Reduction to the $\GL$ case}\label{sec:red}
{\subsection{Fomulation of the main result}}
The reduction of Theorem \ref{thm:intro.main} to Theorem \ref{thm:GLn} is based on the following lemmas:

\begin{lemma} \label{lem:hat} For every reductive group $G$ of type $A$ over $\mathbb{F}_q$ there are groups $G' \subset G''$ and isogeny $i:G' \rightarrow G$, all defined over $\mathbb{F}_q$, such that $G''=G' \cdot \Rad(G'')$ and {$G''$ is of type $GL$.}
\end{lemma}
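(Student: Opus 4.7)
The plan is to build $G'$ as an isogeny cover that separates the derived group from the central torus and replaces the derived group by its simply connected cover, and then to enlarge each quasi-simple factor to a $GL$-type ambient group to obtain $G''$.

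Using the decomposition $G=[G,G]\cdot Z(G)^\circ$, let $\widetilde{H}\to [G,G]$ be the simply connected cover (defined over $\mathbb{F}_q$), set $G':=\widetilde{H}\times Z(G)^\circ$, and let $i:G'\to G$ be the composition
\[
\widetilde{H}\times Z(G)^\circ \longrightarrow [G,G]\times Z(G)^\circ \longrightarrow G,
\]
which is an $\mathbb{F}_q$-isogeny. Since $G$ is of type $A$, I would write $\widetilde{H}=\prod_j \Res_{F_j/\mathbb{F}_q} S_j$, where $F_j/\mathbb{F}_q$ is a finite extension and each $S_j$ is absolutely almost simple, simply connected of type $A$ over $F_j$. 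The triviality of $\operatorname{Br}(F_j)$ (Wedderburn), combined with Galois descent, forces each $S_j$ to be either $SL_{n_j}$ (inner type) or a quasi-split special unitary group $SU_{n_j,E_j/F_j}$ for a quadratic extension $E_j/F_j$ (outer type). In either case there is a natural embedding $S_j\hookrightarrow T_j$ into an $F_j$-form $T_j$ of $GL_{n_j}$ (either $T_j=GL_{n_j}$ or $T_j=U_{n_j,E_j/F_j}$), satisfying $T_j=S_j\cdot Z(T_j)^\circ$.

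Next, I would set $G'':=\bigl(\prod_j \Res_{F_j/\mathbb{F}_q}T_j\bigr)\times Z(G)^\circ$. Then $G'\subset G''$ is immediate from $S_j\subset T_j$. Over $\overline{\mathbb{F}_q}$, each $\Res_{F_j/\mathbb{F}_q}T_j$ becomes a product of copies of $GL_{n_j}$ and $Z(G)^\circ$ becomes a product of $\mathbb{G}_m=GL_1$'s, so $G''$ has type $GL$. Moreover $\Rad(G'')=Z(G'')^\circ$ decomposes as $\prod_j \Res_{F_j/\mathbb{F}_q}Z(T_j)^\circ \times Z(G)^\circ$, and the identity $T_j=S_j\cdot Z(T_j)^\circ$ immediately yields $G''=G'\cdot \Rad(G'')$.

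The main technical point is handling the outer (type 2A) factors carefully: I must confirm that $U_{n,E/F}$ is genuinely an $F$-form of $GL_n$ containing $SU_{n,E/F}$, that this embedding is compatible with restriction of scalars, and that $\Res_{F/\mathbb{F}_q}U_{n,E/F}$ indeed becomes a product of $GL_n$'s after geometric base change. Once the classification of simply connected $F_j$-forms of type $A$ over finite fields is invoked, the rest of the construction is a direct unwinding of definitions.
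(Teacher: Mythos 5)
Your construction is essentially identical to the paper's: take $G'=\widetilde{[G,G]}\times Z(G)^\circ$, use the classification of simply connected type-$A$ groups over finite fields to write the derived cover as a product of Weil restrictions of $SL$ and $SU$, and enlarge each factor to the corresponding $GL$ or $U$. Your use of $Z(G)^\circ$ (rather than the paper's $Z(G)$) and your brief justification via Wedderburn's theorem are minor refinements, not a different route.
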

\begin{proof} Let $G_1:=[G,G]$ be the commutator subgroup of $G$, and let $\widetilde G_1$ be its universal cover.  Set $G'= \widetilde G_1\times Z(G)$ and $i:G'\to G$ be the natural map. Using the clasification of reductive groups over finite fields, we can write  $$\widetilde G_1=\prod_i Res_{E_i/\mathbb{F}_q}(SL_{n_i})\times \prod_j Res_{E_j/\mathbb{F}_q}(SU_{n_j}),$$ for some finite extensions $E_i$ of $\mathbb{F}_q$. Setting $$G''=Z(G)\times\prod_i Res_{E_i/\mathbb{F}_q}(GL_{n_i})\times \prod_j Res_{E_j/\mathbb{F}_q}(U_{n_j}),$$
the result follows.
\end{proof}

\begin{notation} For a reductive group $G$ of type $A$ over $\mathbb{F}_q$, define
\[
d(G)=\min \left\{ |\ker(i)| ^{2}\cdot |\pi_0(G' \cap \Rad(G''))| \mid \text{$G',G'',i$ are as in Lemma \ref{lem:hat}}\right\}
\]
\end{notation}
\begin{notation} Let $\Gamma$ be a finite group acting on a finite set $\Omega$. Denote $$\mu(\Gamma,\Omega):=\max _{\rho\in \irr(\Gamma)}\dim \Hom_{\Gamma} (\rho,\mathbb{C}[\Omega])$$
\end{notation}

The following theorem implies Theorem \ref{thm:intro.main}.

\begin{theorem} \label{thm:main} Let $G$ be a reductive algebraic group of type $A$ over a finite field $\mathbb{F}_q$. Let $X$ be a spherical $G$-space. Then,
\[
\mu(G(\mathbb{F}_q),X(\mathbb{F}_q) )\leq d(G)c(G,X).
\]
\end{theorem}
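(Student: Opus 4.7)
The plan is to reduce Theorem \ref{thm:main} to Theorem \ref{thm:GLn} via the group-theoretic data supplied by Lemma \ref{lem:hat}. I would choose $G' \subset G''$ and isogeny $i : G' \to G$ as in Lemma \ref{lem:hat} realizing the minimum in the definition of $d(G)$, so that $G''$ is of type $\GL$ and $G'' = G' \cdot \Rad(G'')$. The target bound should factor as
\[
\mu(G(\mathbb{F}_q), X(\mathbb{F}_q)) \;\leq\; |\ker(i)|^{2} \, \mu(G'(\mathbb{F}_q), X(\mathbb{F}_q)) \;\leq\; |\ker(i)|^{2} \, |\pi_0(G' \cap \Rad(G''))| \, \mu(G''(\mathbb{F}_q), \tilde X(\mathbb{F}_q)),
\]
where $\tilde X$ is an auxiliary $G''$-space constructed from $X$, so that Theorem \ref{thm:GLn} applied to $G''$ acting on $\tilde X$ closes the argument.

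For the first inequality, the induced homomorphism $i : G'(\mathbb{F}_q) \to G(\mathbb{F}_q)$ has kernel $\ker(i)(\mathbb{F}_q)$ of order at most $|\ker(i)|$ and cokernel $H^1(\mathbb{F}_q, \ker(i))$ of order at most $|\ker(i)|$ by Lang's theorem. Applying the multiplicity-comparison Lemma \ref{lem:mult.iso} to the finite-index normal inclusion $i(G'(\mathbb{F}_q)) \subset G(\mathbb{F}_q)$ and then to the central surjection $G'(\mathbb{F}_q) \twoheadrightarrow i(G'(\mathbb{F}_q))$ produces the factor $|\ker(i)|^{2}$, split as the product of the two Lang-bounded quantities.

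For the second inequality, since $X$ is only a $G$-space and does not a priori carry a $G''$-action, I would form the auxiliary space $\tilde X := (X \times \Rad(G''))/K$, where $K := G' \cap \Rad(G'')$ acts on the first factor through $i : K \to Z(G)$ and on the second factor by inverse multiplication. The action of $G' \times \Rad(G'')$ on $X \times \Rad(G'')$ commutes with this $K$-action and descends to $\tilde X$, and since this action factors through $G'' = (G' \times \Rad(G''))/K$, one obtains a natural $G''$-action on $\tilde X$ that restricts along a free $\Rad(G'')/K$-fibration to the $G'$-action on $X$. The index $[G''(\mathbb{F}_q) : G'(\mathbb{F}_q) \cdot \Rad(G'')(\mathbb{F}_q)]$ is bounded by $|\pi_0(K)|$ via Lang applied to the torus $K^{\circ}$, and Lemma \ref{lem:mult.rad} then supplies the desired comparison with exactly this factor.

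Finally, the incidence variety $(X \times G/B)_G$ is insensitive to isogeny (since $G/B$ is unchanged by $i$) and to multiplication by a central torus (which acts trivially on $G/B$ and contributes only a connected torsor to each stabilizer fiber), so $c(G'', \tilde X) = c(G, X)$ and $\tilde X$ inherits sphericity from $X$. Theorem \ref{thm:GLn} therefore yields $\mu(G''(\mathbb{F}_q), \tilde X(\mathbb{F}_q)) \leq c(G'', \tilde X) = c(G, X)$, and combining the three bounds produces $\mu(G(\mathbb{F}_q), X(\mathbb{F}_q)) \leq d(G) \, c(G, X)$. The main obstacle is the second step: constructing $\tilde X$ so that Lemma \ref{lem:mult.rad} genuinely applies with the sharp factor $|\pi_0(K)|$ rather than $|K|$, and verifying that sphericity and the invariant $c$ are preserved under the passage $X \rightsquigarrow \tilde X$.
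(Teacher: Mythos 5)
The proposal identifies the right ingredients (Lemmas \ref{lem:hat}, \ref{lem:mult.iso}, \ref{lem:mult.rad}, and Theorem \ref{thm:GLn}), but the bookkeeping of the factor $|\ker(i)|^2$ is incorrect, and this hides a genuine gap that the paper's proof fills with two further geometric lemmas you never invoke.

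Your first step claims $\mu(G(\mathbb{F}_q),X(\mathbb{F}_q)) \leq |\ker(i)|^2 \mu(G'(\mathbb{F}_q),X(\mathbb{F}_q))$, with one factor of $|\ker(i)|$ attributed to the kernel of $i$ and one to the cokernel. But Lemma \ref{lem:mult.iso} charges only the \emph{index} $[\Gamma:i(\Gamma')]$, not the kernel: applying it to the surjection $G'(\mathbb{F}_q)\twoheadrightarrow i(G'(\mathbb{F}_q))$ gives index $1$ and therefore no loss. (Indeed the last step of the paper's proof of Lemma \ref{lem:mult.iso} is exactly $\mu(i(\Gamma'),\Omega)\leq\mu(\Gamma',\Omega)$, for free.) So this step yields only $\mu(G(\mathbb{F}_q),X(\mathbb{F}_q)) \leq |\ker(i)|\,\mu(G'(\mathbb{F}_q),X(\mathbb{F}_q))$.

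This matters because your final paragraph then asserts $c(G'',\tilde X) = c(G,X)$ to close the argument, and that claim is false in general. The flag variety is unchanged under the isogeny $i:G'\to G$, but the fibers of $(X\times G/B)_{G'}\to (X\times G/B)_G$ over $(g,x,f)$ are the fibers of $i$, of size $|\ker(i)|$; passing to $G'$ can multiply the number of components. The paper therefore proves only the two one-sided comparisons $c(G'',X\times_{G'}G'')\leq c(G',X)$ (Lemma \ref{lem:c.rad}) and $c(G',X)\leq |\ker(i)|\,c(G,X)$ (Lemma \ref{lem:c.iso}); the second factor of $|\ker(i)|$ in $d(G)$ lives here, on the component-count side, not on the representation-theoretic side. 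Your argument gets the numerical answer because you overpay by $|\ker(i)|$ in step one and then underpay by the same factor in step three, but the underpayment is an unproved (and generally false) equality of component counts, so the proof does not go through. To repair it you should keep only one $|\ker(i)|$ from Lemma \ref{lem:mult.iso}, and replace the asserted equality $c(G'',\tilde X)=c(G,X)$ by the chain $c(G'',X\times_{G'}G'')\leq c(G',X)\leq|\ker(i)|\,c(G,X)$; proving these two inequalities (the content of Lemmas \ref{lem:c.rad} and \ref{lem:c.iso}) is a real piece of the argument that your proposal omits. The construction of $\tilde X$ should also be taken to be $X\times_{G'}G''$ as in Lemma \ref{lem:mult.rad} rather than $(X\times\Rad(G''))/K$, to match the hypotheses of that lemma and of Lemma \ref{lem:c.rad} exactly.
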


{\subsection{Proof of the main result (Theorem \ref{thm:main})}}
For the proof, we will need the following:

\begin{lemma}\label{lem:mult.iso}
Let $i:\Gamma'\to \Gamma$ be a morphism of finite groups and $\Omega$ be a $\Gamma$-space. Then $$\mu(\Gamma,\Omega) \leq [\Gamma : i(\Gamma ')] \mu(\Gamma ',\Omega).$$
\end{lemma}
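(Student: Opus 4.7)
The plan is to first reduce to the case where $i$ is the inclusion of a subgroup $H\subseteq\Gamma$, and then bound $\mu(\Gamma,\Omega)$ in terms of $\mu(H,\Omega)$ by a Frobenius-reciprocity argument. Setting $H:=i(\Gamma')$, every irreducible $H$-representation pulls back via $i$ to an irreducible $\Gamma'$-representation with the same multiplicity in $\mathbb{C}[\Omega]$ (as $\ker i$ acts trivially on $\mathbb{C}[\Omega]$), so $\mu(H,\Omega)\leq\mu(\Gamma',\Omega)$. It is therefore enough to show $\mu(\Gamma,\Omega)\leq [\Gamma:H]\,\mu(H,\Omega)$.

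Fix an irreducible $\Gamma$-representation $\rho$ and decompose $\rho|_H\cong\bigoplus_i m_i\sigma_i$ into pairwise non-isomorphic irreducible $H$-representations. Since any $\Gamma$-equivariant map is automatically $H$-equivariant,
\[
\dim\Hom_\Gamma(\rho,\mathbb{C}[\Omega])\;\leq\;\dim\Hom_H(\rho|_H,\mathbb{C}[\Omega])\;=\;\sum_i m_i\dim\Hom_H(\sigma_i,\mathbb{C}[\Omega])\;\leq\;\mu(H,\Omega)\sum_i m_i,
\]
so the lemma reduces to the combinatorial estimate $\sum_i m_i\leq [\Gamma:H]$.

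For that key estimate, since each nonzero $m_i$ is a positive integer I would use $\sum_i m_i\leq\sum_i m_i^2=\dim\End_H(\rho|_H)$ (by Schur's lemma), and then combine Frobenius reciprocity with the projection formula:
\[
\End_H(\rho|_H)\;\cong\;\Hom_\Gamma\bigl(\rho,\,\Ind_H^\Gamma(\rho|_H)\bigr)\;\cong\;\Hom_\Gamma\bigl(\rho,\,\rho\otimes\mathbb{C}[\Gamma/H]\bigr),
\]
whose dimension is at most $\dim(\rho\otimes\mathbb{C}[\Gamma/H])/\dim\rho=[\Gamma:H]$, since the multiplicity of an irreducible $\rho$ in any $\Gamma$-module $V$ cannot exceed $\dim V/\dim\rho$. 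This projection-formula identification of $\dim\End_H(\rho|_H)$ with a multiplicity inside a tensor product is essentially the only non-routine ingredient; once it is in place, chaining the inequalities yields the claim.
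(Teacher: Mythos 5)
Your proof is correct and follows essentially the same route as the paper's: decompose $\rho|_{i(\Gamma')}$ into irreducibles, bound the multiplicity by the sum of multiplicities of the summands, and control the number of summands (with multiplicity) by $\dim\End_{i(\Gamma')}(\rho|_{i(\Gamma')})\leq[\Gamma:i(\Gamma')]$ via Frobenius reciprocity and the dimension count $\dim\Hom_\Gamma(\rho,V)\leq\dim V/\dim\rho$. Your invocation of the projection formula to rewrite $\Ind_H^\Gamma(\rho|_H)$ as $\rho\otimes\mathbb{C}[\Gamma/H]$ is a cosmetic variant of the paper's direct dimension computation and changes nothing of substance.
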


\begin{proof} Let $\rho$ be an irreducible representation of $\Gamma$ and let $\Res_{i(\Gamma ')}^{\Gamma} \rho=\tau_1 \oplus \ldots \oplus \tau_k$ a decomposition into irreducible components. Since
\[
k \leq \dim \End_{i(\Gamma ')}(\Res_{i(\Gamma)}^{\Gamma} \rho)=\dim \Hom_{\Gamma}(\rho,\Ind_{i(\Gamma)}^{\Gamma '} \Res_{i(\Gamma)}^{\Gamma '}  \rho) \leq \frac{\dim \Ind_{i(\Gamma ')}^{\Gamma} \Res_{i(\Gamma ')}^{\Gamma} \rho}{\dim \rho}=[\Gamma : i(\Gamma ')],
\]
we have
\[
\dim \Hom_{\Gamma}(\rho, \mathbb{C}[\Omega]) \leq \sum_i \dim \Hom_{i(\Gamma ')}(\tau_i, \mathbb{C}[\Omega]) \leq [\Gamma : i(\Gamma ')] \mu(i(\Gamma'),\Omega) \leq [\Gamma : i(\Gamma ')] \mu(\Gamma',\Omega).
\]
\end{proof}

\begin{lemma}\label{lem:mult.rad} Suppose that $ \Gamma \subset \Gamma'$ are finite groups and $\Omega$ is a $\Gamma$-set. Let $\Omega\times_{\Gamma} \Gamma'$ be the quotient of $\Omega\times \Gamma'$ by the diagonal action of $\Gamma$. Then,
\[
\mu (\Gamma,\Omega) \leq [\Gamma':\Gamma \cdot Z(\Gamma')] \cdot \mu (\Gamma',\Omega\times_{\Gamma} \Gamma')
\]
\end{lemma}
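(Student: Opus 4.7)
The plan is to extend $\rho$ to an irreducible representation $\tilde\rho$ of the intermediate subgroup $H:=\Gamma\cdot Z(\Gamma')$, then apply Frobenius reciprocity twice (once for $\Gamma\subset H$, once for $H\subset\Gamma'$) to bound $\dim\Hom_\Gamma(\rho,\C[\Omega])$ in terms of multiplicities in $\C[\Omega\times_\Gamma\Gamma']$. The index $[\Gamma':\Gamma\cdot Z(\Gamma')]$ will enter at the very end through a dimension count in the spirit of the proof of Lemma~\ref{lem:mult.iso}.

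To construct the extension, I first note that $Z(\Gamma')\cap\Gamma\subseteq Z(\Gamma)$ because $Z(\Gamma')$ centralizes $\Gamma$. By Schur's lemma, $\rho$ acts on $Z(\Gamma')\cap\Gamma$ via some character $\chi$, which extends to a character $\tilde\chi$ of the abelian group $Z(\Gamma')$. The formula $\tilde\rho(\gamma z):=\tilde\chi(z)\rho(\gamma)$ then defines a well-defined irreducible representation of $H$ with $\tilde\rho|_\Gamma=\rho$. Frobenius reciprocity for $\Gamma\subset H$ gives
\[
\dim\Hom_\Gamma(\rho,\C[\Omega])=\dim\Hom_H(\tilde\rho,\Ind_\Gamma^H\C[\Omega])=\dim\Hom_H(\tilde\rho,\C[\Omega\times_\Gamma H]).
\]

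To pass from $H$ to $\Gamma'$, observe that the natural map $\Omega\times_\Gamma H\to\Omega\times_\Gamma\Gamma'$, $[\omega,h]\mapsto[\omega,h]$, is $H$-equivariant and injective: any $\gamma\in\Gamma$ witnessing an identification in the target automatically witnesses it in the source, since $\Gamma\subseteq H$. Thus $\C[\Omega\times_\Gamma H]$ is an $H$-submodule of $\Res^{\Gamma'}_H\C[\Omega\times_\Gamma\Gamma']$, and combining with Frobenius reciprocity for $H\subset\Gamma'$ yields
\[
\dim\Hom_H(\tilde\rho,\C[\Omega\times_\Gamma H])\leq\dim\Hom_{\Gamma'}(\Ind_H^{\Gamma'}\tilde\rho,\C[\Omega\times_\Gamma\Gamma']).
\]

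Finally, decompose $\Ind_H^{\Gamma'}\tilde\rho=\bigoplus_i n_i\sigma_i$ into distinct $\Gamma'$-irreducibles. By Frobenius reciprocity, each $\sigma_i$ contains $\tilde\rho$ in its restriction to $H$, so $\dim\sigma_i\geq\dim\tilde\rho$; comparing total dimensions in the induced representation gives $\sum_i n_i\leq[\Gamma':H]=[\Gamma':\Gamma\cdot Z(\Gamma')]$. The right-hand side of the last display is therefore at most $[\Gamma':\Gamma\cdot Z(\Gamma')]\cdot\mu(\Gamma',\Omega\times_\Gamma\Gamma')$, and taking the maximum over $\rho$ yields the claim. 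The only non-formal step is the construction of $\tilde\rho$; the remainder is standard bookkeeping with Frobenius reciprocity together with a one-line dimension count.
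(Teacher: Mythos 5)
Your proof is correct and follows essentially the same route as the paper: extend $\rho$ to the intermediate group $H=\Gamma\cdot Z(\Gamma')$, use Frobenius reciprocity together with the inclusion $\Omega\times_\Gamma H\hookrightarrow\Omega\times_\Gamma\Gamma'$, and bound the number of irreducible constituents of $\Ind_H^{\Gamma'}\tilde\rho$ by $[\Gamma':H]$. The only difference is cosmetic: you bound $\sum_i n_i$ directly by comparing $\dim\sigma_i\geq\dim\tilde\rho$, while the paper passes through $\mathrm{length}\leq\dim\End(\Ind\tilde\rho)=\dim\Hom_H(\tilde\rho,\Res\Ind\tilde\rho)\leq[\Gamma':H]$.
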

\begin{proof}
Any representation of $\Gamma$ can be extended to $\Gamma \cdot Z(\Gamma')$.
Since $\mathbb{C}[\Omega \times_\Gamma (\Gamma \cdot Z(\Gamma '))]=\Ind_\Gamma ^{\Gamma\cdot Z(\Gamma ')} \mathbb{C}[\Omega]$, we have $$\mu (\Gamma,\Omega) \leq \mu (\Gamma\cdot  Z(\Gamma'),\Omega\times_{\Gamma}( \Gamma\cdot  Z(\Gamma')))\leq\mu (\Gamma\cdot  Z(\Gamma'),\Omega\times_{\Gamma} \Gamma').$$ Let $\rho$ be an irreducible representation of $\Gamma \cdot Z(\Gamma ')$. We have

\begin{align*}
\dim\Hom_{\Gamma \cdot Z(\Gamma ')}(\rho,\C[\Omega\times_{\Gamma} \Gamma']) &=\dim\Hom_{\Gamma'}(\ind_{\Gamma\cdot  Z(\Gamma')}^{\Gamma'}(\rho),\C[\Omega\times_{\Gamma} \Gamma'])\\
&\leq \mathrm length(\ind_{\Gamma\cdot  Z(\Gamma')}^{\Gamma'}(\rho)) \cdot\mu (\Gamma',\Omega\times_{\Gamma} \Gamma')\\
&\leq \dim\End_{\Gamma'}(\ind_{\Gamma\cdot  Z(\Gamma')}^{\Gamma'}(\rho)) \cdot\mu (\Gamma',\Omega\times_{\Gamma} \Gamma')\\
&=\dim\Hom_{\Gamma\cdot  Z(\Gamma')}(\rho,\ind_{\Gamma\cdot  Z(\Gamma')}^{\Gamma'}(\rho)) \cdot\mu (\Gamma',\Omega\times_{\Gamma} \Gamma')\\
&\leq \frac{\dim\ind_{\Gamma\cdot  Z(\Gamma')}^{\Gamma'}(\rho)}{\dim \rho} \cdot\mu (\Gamma',\Omega\times_{\Gamma} \Gamma')=[\Gamma':\Gamma \cdot Z(\Gamma')] \cdot \mu (\Gamma',\Omega\times_{\Gamma} \Gamma').
\end{align*}

\end{proof}

\begin{lemma}\label{lem:c.rad} Suppose that $ G \subset G'$ are connected, reductive, and quasi-split algebraic groups such that $G'=G \cdot \Rad(G')$, and let $X$ be a $G$-variety. Then $$c(G,X) \geq c(G',X \times_G G').$$
\end{lemma}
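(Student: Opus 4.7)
Write $Y := X \times_G G'$. The plan is to build a surjective morphism
$$\tilde{\phi} : (X \times G/B)_G \times \Rad(G') \longrightarrow (Y \times G'/B')_{G'}$$
and to conclude from the fact that the number of absolutely irreducible components cannot grow under a surjection: since $\Rad(G')$ is a geometrically connected torus, the source will have the same number of absolutely irreducible components as $(X \times G/B)_G$, namely $c(G, X)$, yielding $c(G', Y) \leq c(G, X)$.

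First I would choose a Borel $B \subset G$ defined over the base field (available by the quasi-split hypothesis) and set $B' := B \cdot \Rad(G')$. Because $\Rad(G') = Z(G')^{\circ}$ is central in $G'$, $B'$ is connected and solvable. Using the chain of inclusions $G \cap \Rad(G') \subset Z(G') \cap G \subset Z(G) \subset B$, one checks that $B' \cap G = B$; together with $G' = G \cdot \Rad(G')$ and $\Rad(G') \subset B'$, this implies that the natural morphism $\iota : G/B \to G'/B'$ is an isomorphism, and in particular that $B'$ is a Borel of $G'$.

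Next I would define $\tilde\phi$ by
$$
\tilde{\phi}(g, x, g_1 B, r) := \bigl(g, [x, r], \iota(g_1 B)\bigr).
$$
Well-definedness is immediate from $g \cdot [x, r] = [gx, r] = [x, r]$ (the $G$-action on $X \times_G G'$ is through the first factor) together with $B \subset B'$. Surjectivity is the crux: given $(h, y, g_1' B')$ in the target, I would use $\iota$ to replace $g_1'$ by an element $g_1 \in G$, and use $G' = G \cdot \Rad(G')$ to write $y = [x, r]$ with $r \in \Rad(G')$. The key computation, which I expect to be the main obstacle, is that the $G'$-stabilizer of $[x, r] \in Y$ equals $\Stab_G(x)$; this is where the centrality of $\Rad(G')$ in $G'$ is used essentially, and it forces the unexpected conclusion that any $h \in G'$ fixing $[x,r]$ must already lie in $G$. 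Granted this, the hypotheses on $h$ reduce to $h \in \Stab_G(x)$ and $g_1^{-1} h g_1 \in B' \cap G = B$, so that $(h, x, g_1 B, r)$ lies in the source and maps to $(h, y, g_1' B')$. The remaining step, that the number of absolutely irreducible components of the target is bounded by that of the source, is a standard consequence of surjectivity for morphisms of finite-type varieties.
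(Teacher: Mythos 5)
Your proof is correct and follows the same strategy as the paper: one constructs a surjection from $\Rad(G') \times (X \times G/B)_G$ onto $((X \times_G G') \times G'/B')_{G'}$ using the identification of flag varieties, and then compares component counts. You have in fact spelled out the surjectivity argument (the computation that $\Stab_{G'}([x,r]) = \Stab_G(x)$, which rests on the centrality of $\Rad(G')$) that the paper leaves implicit.
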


\begin{proof} Denote $X'=X \times_G G'$. The flag varieties of $G$ and $G'$ are isomorphic; we denote them by $\mathcal{F}$. The map
$$\xymatrix{
 \Rad(G') \times ((X \times \mathcal{F}) \times G)\ar@{->}[rrrr]_{\ \ \ (r,((x,f),g)) \mapsto (((x,r),f),g)}   &&&& ((X \times G' )\times \mathcal{F}) \times {G'}
 }$$
induces an onto map $\Rad(G') \times (X\times \mathcal{F})_G \rightarrow (X'\times \mathcal{F})_{G'}$.

Thus the number of components of $\Rad(G') \times (X\times \mathcal{F})_G$ is not smaller than the number of components of $(X'\times \mathcal{F})_{G'}$,  which implies the assertion.
%
\end{proof}

\begin{lemma}\label{lem:c.iso}
Let $i:G'\to G$ be an isogeny of connected reductive algebraic groups and let $X$ be a $G$ space. Then $$c(G',X)\leq |ker(i)| c(G,X)$$
\end{lemma}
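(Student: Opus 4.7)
The plan is to construct an explicit finite surjective morphism from $(X \times G'/B')_{G'}$ to $(X \times G/B)_G$ with all fibers of size exactly $|\ker(i)|$, and then use the elementary fact that such a morphism can increase the number of absolutely irreducible components by at most a factor of $|\ker(i)|$.

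For the setup, I would use that $\ker(i)$ is a central finite subgroup scheme of $G'$ (standard for isogenies of connected reductive groups), hence contained in every Borel subgroup of $G'$. Fix a Borel $B'\subset G'$ and set $B:=i(B')$; then $B$ is a Borel of $G$, $B'=i^{-1}(B)$, and the induced map $G'/B'\to G/B$ is an isomorphism. Using this identification, define
\[
\phi:(X \times G'/B')_{G'}\longrightarrow (X \times G/B)_G,\qquad (g',(x,f'B'))\longmapsto (i(g'),(x,i(f')B)),
\]
where $G'$ acts on $X$ through $i$. This is well-defined since $g'\cdot x=x$ forces $i(g')\cdot x=x$, and $g'f'B'=f'B'$ forces $i(g')i(f')B=i(f')B$.

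Next I would verify that $\phi$ is surjective with every fiber of cardinality $|\ker(i)|$. Surjectivity: given $(g,(x,fB))$ in the target, the condition $gfB=fB$ says $g\in fBf^{-1}$; for any lift $f'\in G'$ of $f$, the surjection $i:f'B'(f')^{-1}\twoheadrightarrow fBf^{-1}$ (whose kernel is $\ker(i)\subset f'B'(f')^{-1}$, using centrality) produces $g'\in f'B'(f')^{-1}$ with $i(g')=g$, and then $g'\cdot x=x$ because $i(g')=g$ acts trivially on $x$. Fiber count: any preimage of $(g,(x,fB))$ has the form $(g',(x,f'B'))$ where $f'B'$ is uniquely determined via the iso $G'/B'\cong G/B$, and the preimages of $g$ in $f'B'(f')^{-1}$ form a single coset of $\ker(i)$, giving exactly $|\ker(i)|$ points.

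Finally, I would pass to $\overline{\mathbb{F}_q}$ and argue as follows. Since $\phi$ is finite and every fiber has size $|\ker(i)|$, it is surjective and preserves dimension. For each absolutely irreducible component $Z$ of $(X\times G/B)_G$, the preimage $\phi^{-1}(Z)$ is a finite cover of $Z$ of degree $|\ker(i)|$; restricted to any irreducible component $Z'$ of $\phi^{-1}(Z)$, the map $Z'\to Z$ is dominant and finite of some degree $d_{Z'}\geq 1$, and the degrees satisfy $\sum_{Z'}d_{Z'}\leq |\ker(i)|$ (the generic fiber of $Z$ has size $|\ker(i)|$ and is partitioned among the components). Hence $\phi^{-1}(Z)$ contributes at most $|\ker(i)|$ absolutely irreducible components, and since every component of the source lies over some component of the target, summing gives $c(G',X)\leq |\ker(i)|\,c(G,X)$. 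The only subtle point is the centrality of $\ker(i)$, which ensures both that $\ker(i)\subset f'B'(f')^{-1}$ for every $f'$ and that $g'$ and $i(g')$ act identically on $X$; everything else is bookkeeping.
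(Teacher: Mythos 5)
Your proof is correct and follows essentially the same route as the paper: identify the flag varieties of $G$ and $G'$, push the group coordinate forward by $i$, and bound the number of components of the source by the number of components of the target times the generic fiber size. The one stylistic difference worth noting is that the paper phrases the argument on the \emph{ambient} space, observing that $(x,f,g')\mapsto(x,f,i(g'))$ is a finite \'etale (more precisely, finite flat) map $X\times\mathcal F\times G'\to X\times\mathcal F\times G$ of degree $|\ker(i)|$ under which $(X\times\mathcal F)_{G'}$ is exactly the preimage of $(X\times\mathcal F)_G$; this makes the component bound immediate, because preimages of irreducible subvarieties under a finite flat map of degree $d$ have at most $d$ irreducible components, all dominating. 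You instead restrict the map to the incidence subvarieties and assert directly that each component of $\phi^{-1}(Z)$ dominates $Z$ with degrees summing to at most $|\ker(i)|$; that assertion is true here, but it is not automatic for an arbitrary finite surjection with equal fiber cardinalities, and the cleanest justification is precisely the paper's observation that $\phi$ is the restriction of a finite flat ambient map to a full preimage, hence itself finite flat of degree $|\ker(i)|$. If you add that one sentence, your write-up is complete. (A small caveat that applies equally to the paper: for an inseparable isogeny $\ker(i)$ need not be \'etale, so ``\'etale with fibers of size $|\ker(i)|$'' should be read as ``finite flat of degree $|\ker(i)|$''; the component bound still holds.)
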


\begin{proof} Let $\mathcal{F}$ be the flag variety of $G$ and $G'$. The map $\pi : X \times \mathcal{F} \times G' \rightarrow X \times \mathcal{F} \times G$ defined by $(x,f,g) \mapsto (x,f,i(g))$ is an etale map whose fibers have sizes $|\ker(i)|$. Therefore, the number of irreducible components of $\pi ^{-1} (X \times_G \mathcal{F})=X\times_{G'} \mathcal{F}$ is at most $|\ker(i)|c(G,X)$.
\end{proof}

\begin{proof}[Proof of Theorem \ref{thm:main}] If $\varphi :L \rightarrow L'$ is an epimorphism of algebraic groups over $\mathbb{F}_q$ {such that $\ker \varphi$ is abelian}, then $$[L'(\mathbb{F}_q):\varphi(L(\mathbb{F}_q))] \leq |H^1(\Gal_{\mathbb{F}_q},\ker \varphi)| = |H^1(\Gal_{\mathbb{F}_q},\pi_0(\ker \varphi))| \leq | \pi_0(\ker \varphi) |.$$ Therefore,

\begin{eqnarray*}
\mu(G(\mathbb{F}_q),X(\mathbb{F}_q) ) & \stackrel{\text{Lemma \ref{lem:mult.iso}}}{\leq}&  [G(\mathbb{F}_q):i(G'(\mathbb{F}_q))]\mu(G'(\mathbb{F}_q),X(\mathbb{F}_q)) \\
& \stackrel{}{\leq} &|\ker(i)|\mu(G'(\mathbb{F}_q),X(\mathbb{F}_q) )\\
&\stackrel{\text{Lemma \ref{lem:mult.rad}}}{\leq}&|\ker(i)|[G''(\mathbb F_q):G'(\mathbb F_q)Z(G''(\mathbb F_q))]\mu(G''(\mathbb{F}_q),X(\mathbb{F}_q)\times_{G'(\mathbb{F}_q)}{G''(\mathbb{F}_q)}])\\
&{\leq} &|\ker(i)|[G''(\mathbb F_q):G'(\mathbb F_q)Z(G''(\mathbb F_q))]\mu(G''(\mathbb{F}_q),(X\times_{G'}G'')(\mathbb{F}_q)\\
&\stackrel{}{\leq}& |\ker(i)||\pi_0(G' \cap \Rad(G''))|\mu(G''(\mathbb{F}_q),(X\times_{G'}G'')(\mathbb{F}_q))\\
&\stackrel{\text{Theorem \ref{thm:GLn}}}{\leq}&|\ker(i)||\pi_0(G' \cap \Rad(G''))|c(G'',X\times_{G'}G'')\\
&\stackrel{\text{Lemma \ref{lem:c.rad}}}{\leq}&|\ker(i)||\pi_0(G' \cap \Rad(G''))|c(G',X)\\
&\stackrel{\text{Lemma \ref{lem:c.iso}}}{\leq}&|\ker(i)|^{2}|\pi_0(G' \cap \Rad(G''))|c(G,X)= d(G)c(G,X).
\end{eqnarray*}
\end{proof}

\appendix \section{Characters and character sheaves} \label{app:chcs}

The goal of this appendix is to explain how to derive Theorem \ref{thm:CS} from the literature.

In \S\S\ref{subsec:ch.induced} we discuss the general notion of a character sheaf and explain that, for groups of type $\GL$, they are all induced (i.e. coincide with Definition \ref{defn:CS}).

In  \S\S\ref{subsec:chcs} we discuss the classification of characters and their relation with character sheaves, and explain that characters of groups of type $\GL$ can be obtained by the sheaf to function correspondence from character sheaves.

\subsection{All character sheaves are induced for $\GL_n$} \label{subsec:ch.induced}

In this subsection, all reductive groups are defined over the algebraic closure of a finite field $\mathbb{F}_q$.

Definition \ref{defn:CS} is a special case of the definitions \cite[I 2.10, 4.1]{Lus_CS} of the notion of a character sheaf on a reductive group and the notion of induction of character sheaves from a Levi subgroup to a reductive group. In addition, Lusztig defines in \cite[I 3.10]{Lus_CS} the notion of cuspidal character sheaves, and shows in \cite[I 4.4]{Lus_CS} that all irreducible character sheaves are direct summands of character sheaves induced from irreducible cuspidal character sheaves.

\begin{proposition} Let $G=\prod_i \GL_{n_i}$ and assume that $n_1>1$. Then there are no irreducible cuspidal character sheaves on $G$.
\end{proposition}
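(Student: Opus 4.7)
The strategy is to combine a straightforward reduction to a single factor with Lusztig's classification of cuspidal pairs.

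First, I would verify that for any product $G = G_1 \times G_2$ of connected reductive groups, every irreducible character sheaf on $G$ is an external tensor product $\mathcal{A}_1 \boxtimes \mathcal{A}_2$, and it is cuspidal if and only if each $\mathcal{A}_i$ is. This follows from the fact that parabolic subgroups and Levis in $G_1 \times G_2$ are products of those of the factors, and that character sheaf induction is compatible with external products (an induction from a proper Levi of one factor lifts to an induction from a proper Levi of the product). Consequently, it suffices to show that no irreducible cuspidal character sheaf exists on a single $\GL_n$ with $n \geq 2$.

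Next, I would invoke Lusztig's structural description of cuspidal character sheaves from \cite{Lus_CS}: an irreducible cuspidal character sheaf on a connected reductive $G$ is determined by a cuspidal pair consisting of a semisimple element $s \in G$, a unipotent class $\mathcal{O} \subset Z_G(s)^\circ$, and an irreducible local system $\mathcal{E}$ on $s \cdot \mathcal{O}$ whose restriction to $\mathcal{O}$ is a cuspidal unipotent local system on $Z_G(s)^\circ$. In particular, cuspidality forces the existence of a cuspidal unipotent local system on the reductive centralizer $Z_G(s)^\circ$. I would then appeal to Lusztig's classification of cuspidal unipotent local systems for classical groups (see \cite[Section 23]{Lus_CS} and the treatment in \cite{Lus_book,Sho}), which shows that $\GL_m$ carries no cuspidal unipotent local system for $m \geq 2$.

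To conclude, I would use the standard fact that, over $\overline{\mathbb{F}_q}$, the connected centralizer of any semisimple element in $\GL_n$ is a product $\prod_j \GL_{m_j}$. Combined with the previous step, the existence of a cuspidal unipotent local system on $Z_G(s)^\circ$ forces every $m_j = 1$, i.e., $s$ is regular semisimple and $Z_G(s)^\circ$ is a maximal torus. But then the associated character sheaf is, by its very construction as $\ind$ from a torus, a summand of an induction from a Borel (a proper parabolic when $n \geq 2$), contradicting cuspidality. Hence no cuspidal character sheaf on $\GL_n$ with $n \geq 2$ can exist, and the proposition follows.

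The main obstacle is not any of the intermediate steps but the need to cite Lusztig's classification of cuspidal unipotent local systems accurately; the external-product reduction and the explicit description of semisimple centralizers in $\GL_n$ are routine.
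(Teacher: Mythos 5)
Your proof takes a genuinely different route from the paper's. The paper bypasses the classification by cuspidal pairs entirely: it invokes Lusztig's structural results \cite[II.7.1.1--7.1.3]{Lus_CS} to reduce to a $G\times Z(G)$-equivariant local system on a single orbit $O$ (the regular unipotent orbit), notes that centralizers of regular unipotents in groups of type $\GL$ are connected (so the local system is trivial on $G$-orbits), and then exhibits an explicit non-vanishing top compactly-supported cohomology of $U\cap O$ that contradicts Lusztig's vanishing criterion \cite[II.7.1.3]{Lus_CS}. Your approach goes through the generalized Springer correspondence: reduce to one $\GL_n$ factor, use the cuspidal-pair description of cuspidal character sheaves, observe that connected centralizers of semisimple elements in $\GL_n$ are products of $\GL_m$'s which carry no cuspidal unipotent local systems for $m\geq2$, and rule out the remaining torus case.

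There is, however, a real gap in your last step. When all $m_j=1$, i.e.\ $Z_G(s)^\circ=T$, the trivial unipotent pair on $T$ \emph{is} cuspidal for the generalized Springer correspondence (a torus has no proper Levi), so the cuspidal-pair machinery does not by itself exclude this case. Your claim that ``the associated character sheaf is, by its very construction as $\ind$ from a torus, a summand of an induction from a Borel'' misstates the situation: a cuspidal character sheaf is not constructed as an induction, and the fact that its unipotent support comes from the torus does not automatically present the sheaf as a summand of $\ind_T\mathcal{L}$. To close the gap you need one of the following: (i) invoke the \emph{isolation} condition on the semisimple part of the support of a cuspidal character sheaf --- $s$ must satisfy $Z(Z_G(s)^\circ)^\circ=Z(G)^\circ$, which fails for $s$ regular semisimple in $\GL_n$ with $n\geq2$; or (ii) argue directly that an irreducible $G$-equivariant perverse sheaf whose support is the closure of the $Z(G)^\circ$-orbit of a regular semisimple class is, via the Grothendieck--Springer resolution over the regular semisimple locus, necessarily a direct summand of $\ind_T\mathcal{L}$ for some Kummer local system $\mathcal{L}$ on $T$, hence not cuspidal when $n\geq2$. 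Either fix is standard, but as written the step is circular. You should also make the citation for ``cuspidal character sheaves are classified by cuspidal pairs'' precise, since the full statement relies on cleanness, which is available for type $A$ but needs to be referenced.
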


\begin{proof} {Assume the contrary, and let $\mathcal{A}$ be an irreducible cuspidal character sheaf on $G$.} By \cite[II.7.1.1]{Lus_CS}, there is an integer $k$ prime to $q$ such that $\mathcal{A}$ is equivariant with respect to the action of $G \times Z(G)$ on $G$ given by $(g,z) \cdot h=g ^{-1} h g z^k$. Let $O$ be the orbit of a regular unipotent element. By \cite[II.7.1.2]{Lus_CS} $\mathcal{A}$ is the IC-extension of a $G \times Z(G)$-equivariant local system $\mathcal{E}$ on $O$. Since the centralizer of every element in $O$ is connected, the restriction of $\mathcal{E}$ to any $G$-orbit is trivial. By \cite[II.7.1.3]{Lus_CS}, for every parabolic $P=LU \neq G$ and any $g\in L$, we have
\begin{equation}\label{eq:zero}
H_c^{\dim\, O / Z(G) -\dim \Ad(G)g}(gU \cap O, \mathcal{E}|_{gU \cap O})=0.
\end{equation}

Taking $P=B$ and $g=1$, the variety $U \cap O$ is open in $U$, is contained in a $G$-orbit, and $\dim\, O / Z(G) -\dim \Ad(G)g=2\dim U$. Hence,
\begin{equation}
H_c^{\dim\, O / Z(G) -\dim \Ad(G)g}(gU \cap O, \mathcal{E}|_{gU \cap O})=H_c^{2\dim U}(U\cap O, \underline{\mathbb{Q}_\ell}) \neq 0,
\end{equation}
{contradicting \eqref{eq:zero}}.
\end{proof}

\begin{corollary} \label{cor:all.induced} Let $G=\prod_i \GL_{n_i}$. Then all character sheaves on {$G$} are induced (in the sense of Definition \ref{defn:CS}).
\end{corollary}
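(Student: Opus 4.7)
The plan is to deduce the corollary by combining the preceding proposition with Lusztig's general classification of character sheaves, already recalled at the start of this subsection: every irreducible character sheaf on a reductive group is a direct summand of a sheaf obtained by inducing an irreducible cuspidal character sheaf from some Levi subgroup (\cite[I 4.4]{Lus_CS}).

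First, I would observe that every Levi subgroup $L$ of $G=\prod_i \GL_{n_i}$ is again a product of general linear groups, of the form $L=\prod_j \GL_{m_j}$. By the preceding proposition, such an $L$ supports no irreducible cuspidal character sheaf unless every $m_j=1$, in which case $L$ is a maximal torus $T$ of $G$. Consequently, if $\mathcal{M}$ is any irreducible character sheaf on $G$, Lusztig's classification forces $\mathcal{M}$ to be a direct summand of $\ind_T^G \mathcal{A}$ for some maximal torus $T$ and some irreducible cuspidal character sheaf $\mathcal{A}$ on $T$.

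Next, I would unwind definitions at the torus: since a torus has no proper parabolic subgroups, every irreducible character sheaf on $T$ is automatically cuspidal, and it coincides with the notion of a character sheaf on $T$ used in Definition \ref{defn:CS}(1), namely a one-dimensional $\overline{\mathbb{Q}_\ell}$-local system $\mathcal{L}$ whose tensor powers are trivial of some order prime to $q$. Similarly, Lusztig's induction functor from the Levi $T$ (via any Borel $B \supset T$) is realized by exactly the diagram $G \leftarrow (G/B)_G \leftarrow V \to T$ appearing in Definition \ref{defn:CS}(2); so $\ind_T^G\mathcal{A}$ in Lusztig's sense agrees with $\ind_T\mathcal{L}$ in the sense of Definition \ref{defn:CS}(2). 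Putting these together, $\mathcal{M}$ is an irreducible direct summand of $\ind_T \mathcal{L}$ for some character sheaf $\mathcal{L}$ on a maximal torus, i.e., $\mathcal{M}$ is an induced character sheaf in the sense of Definition \ref{defn:CS}.

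The substantive geometric input — ruling out cuspidal character sheaves on Levi subgroups with some factor of rank $\geq 2$ — was already carried out in the preceding proposition, so the only remaining work is bookkeeping: matching Lusztig's definitions of cuspidality and parabolic induction with those of Definition \ref{defn:CS} when the Levi is a maximal torus. This matching is essentially tautological, so I do not expect any real obstacle beyond careful cross-referencing.
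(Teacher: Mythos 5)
Your proposal is correct and is exactly the argument the paper intends: the paper states the corollary immediately after the proposition, leaving implicit precisely the chain you spell out (Lusztig's reduction to cuspidal sheaves on Levi subgroups via \cite[I 4.4]{Lus_CS}, the observation that Levis of $\prod_i \GL_{n_i}$ are again products of $\GL$'s, the preceding proposition forcing the Levi to be a maximal torus, and the identification of Lusztig's induction from a torus with the diagram in Definition \ref{defn:CS}). No gap; your write-up just makes the paper's implicit reasoning explicit.
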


\subsection{Characters and character sheaves} \label{subsec:chcs}

Let $G$ be a connected reductive group defined over $\mathbb{F}_q$. We summarize the classification of irreducible characters of $G(\mathbb{F}_q)$ described in \cite{Lus_book}:
\begin{enumerate}
\item \label{itm:ELn} For every integer $n$, which is prime to $q$, and every line bundle $L$ over the flag variety of $G$, Lusztig defines a set $\mathcal{E}_{L,n} \subset \Irr G(\mathbb{F}_q)$
(\cite[2.19]{Lus_book}), and shows in \cite[6.5]{Lus_book} that every irreducible representation of $G(\mathbb{F}_q)$ is contained in one of the sets $\mathcal{E}_{L,n}$.
\item \label{itm:WLngamma} For every $L$ and $n$ as above, Lusztig attaches in \cite[1.8]{Lus_book} a Coxeter group $W_{L,n}$, together with an automorphism $\gamma$ of $W_{L,n}$ (\cite[2.15.1]{Lus_book}). In the case $G$ has type $\GL$, the Coxeter group $W_{L,n}$ is a product of symmetric groups.
\item For every Coxeter group $W$, Lusztig defines the notion of a family (\cite[4.2]{Lus_book}), which is a subset of $\Irr_{\mathbb{Q}} W$ (the collection of isomorphism classes of irreducible $\mathbb{Q}$-representations of $W$). The set of families forms a partition of $\Irr_\mathbb{Q} W$. In the case $W$ is a product of symmetric groups, families are singletons (\cite[4.4]{Lus_book}).
\item For every family $\mathcal{F}$, Lusztig attaches a finite group $\mathcal{G}_\mathcal{F}$ (\cite[4.4--4.13,4.21]{Lus_book}). In the case $W$ is a product of symmetric groups, $\mathcal{G}_\mathcal{F}$ is the trivial group, for every family $\mathcal{F}$ (\cite[4.4]{Lus_book}).
\item For every family $\mathcal{F}$ which is $\gamma$-stable\footnote{$\gamma$ is assumed to come form an automorphism of the root system of $W$, which is the case for the $\gamma$ constructed in \ref{itm:WLngamma}}, Lusztig defines in \cite[4.18--4.21]{Lus_book} a group $\widetilde{\mathcal{G}}_\mathcal{F}$ which is an extension of $\mathcal{G}_\mathcal{F}$ by $\mathbb{Z}$.
\item For an extension $1 \rightarrow \mathcal{G}\rightarrow \widetilde{\mathcal{G}} \rightarrow \mathbb{Z}\rightarrow 1$, Lusztig defines in \cite[4.16,4.21]{Lus_book} two sets, $\mathfrak{M}(\mathcal{G} \subset \widetilde{\mathcal{G}})$, and $\overline{\mathfrak{M}}(\mathcal{G} \subset \widetilde{\mathcal{G}})$. In the case $\mathcal{G}$ is trivial, $\mathfrak{M}(\mathcal{G} \subset \widetilde{\mathcal{G}})$ is the set of roots of unity, and $\overline{\mathfrak{M}}(\mathcal{G} \subset \widetilde{\mathcal{G}})$ is a singleton.
\item \label{itm:bijection} In \cite[4.23]{Lus_book}, Lusztig constructs a bijection between $\mathcal{E}_{L,n}$ and $$\overline{X}(W_{L,n},\gamma):=\bigsqcup \overline{\mathfrak{M}}(\mathcal{G}_\mathcal{F} \subset \widetilde{\mathcal{G}}_\mathcal{F}),$$ where the union is over $\gamma$-stable families in $\Irr_\mathbb{Q} W$.
\end{enumerate}

Based on this classification Lustig defined the notion of an almost character in the following way:
\begin{enumerate} \setcounter{enumi}{7}
\item For $\mathcal{G},\widetilde{\mathcal{G}}$ as above, Lusztig defines in \cite[4.21]{Lus_book} a map $$\left\{ \, ,\, \right\} : \mathfrak{M}(\mathcal{G} \subset \widetilde{\mathcal{G}}) \times \overline{\mathfrak{M}}(\mathcal{G} \subset \widetilde{\mathcal{G}}) \rightarrow \overline{\mathbb{Q}_\ell}.$$ In the case $\mathcal{G}$ is trivial, the pairing is the projection to the second coordinate.
\item \label{itm:almost.char} For every family $\mathcal{F}$ and every $x\in \mathfrak{M}(\mathcal{G}_\mathcal{F} \subset \widetilde{\mathcal{G}}_\mathcal{F})$, Lusztig defines in \cite[4.24.1]{Lus_book} a virtual representation $R_{\mathcal{F},x}$ of $G(\mathbb{F}_q)$. In view of the above, this definition implies that, in the case where $G$ has type $\GL$, $R_{x,\mathcal{F}}$ is a root of unity times the irreducible representation corresponding to the single element of $\overline{\mathfrak{M}}(\mathcal{G}_\mathcal{F} \subset \widetilde{\mathcal{G}}_\mathcal{F})$ under the bijection in \ref{itm:bijection}. These elements are called almost characters, see \cite[13.6]{Lus_book}.
\end{enumerate}

The relation between almost characters and character sheaves is given by the following:
\begin{enumerate} \setcounter{enumi}{9}
\item \label{itm:shoji} \cite[Theorem 5.7]{Sho} implies that, if $G$ is a group of type $\GL$ and $R$ is an almost character of $G$, then there is a Frobenius-invariant character sheaf $A$, a Weil structure on $A$ which is pure of weight $\dim\, G-\dim\supp A$ (\cite[1.4.1]{Sho}), and a root of unity $\zeta$ such that $R=\zeta \chi_A$.
\end{enumerate}

Theorem \ref{thm:CS} follows now from \ref{itm:ELn},\ref{itm:bijection},\ref{itm:almost.char},\ref{itm:shoji}, and Corollary \ref{cor:all.induced}.

\end{document}